\begin{document}
\title{On Volumes of Arithmetic Line Bundles II}
\author{Xinyi Yuan
\footnote{The author is fully supported by a research fellowship of the Clay Mathematics Institute.}
}
\maketitle

\theoremstyle{plain}
\newtheorem{thm}{Theorem}[section]
\newtheorem*{conj}{Conjecture}
\newtheorem*{notation}{Notation}
\newtheorem{cor}[thm]{Corollary}
\newtheorem*{corr}{Corollary}
\newtheorem{lem}[thm]{Lemma}
\newtheorem{pro}[thm]{Proposition}
\newtheorem{definition}[thm]{Definition}
\newtheorem*{thmm}{Theorem}

\theoremstyle{remark} \newtheorem*{remark}{Remark}
\theoremstyle{remark} \newtheorem*{example}{Example}

\newcommand{\RR}{\mathbb{R}}
\newcommand{\QQ}{\mathbb{Q}}
\newcommand{\CC}{\mathbb{C}}
\newcommand{\ZZ}{\mathbb{Z}}
\newcommand{\NN}{\mathbb{N}}
\newcommand{\FF}{\mathbb{F}}
\newcommand{\PP}{\mathbb{P}} 
\renewcommand{\AA}{\mathbb{A}} 

\newcommand{\fp}{\mathbb{F}_p}
\newcommand{\fwp}{\mathbb{F}_{\wp}}

\newcommand{\rank}{\mathrm{rank}}         % rank
\newcommand{\vol}{\mathrm{vol}}           % vol
\newcommand{\volchi}{\mathrm{vol}_{\chi}} % vol_{\chi}
\newcommand{\divv}{\mathrm{div}}          % divisor
\newcommand{\Spec}{\mathrm{Spec}}         % Spec
\newcommand{\ord}{\mathrm{ord}}           % ord

\newcommand{\lb}{\mathcal{L}}             % Line bundle
\newcommand{\mb}{\mathcal{M}}
\newcommand{\nb}{\mathcal{N}}
\newcommand{\fb}{\mathcal{F}}
\newcommand{\eb}{\mathcal{E}}
\newcommand{\tb}{\mathcal{T}}
\newcommand{\ob}{\mathcal{O}}
\newcommand{\ib}{\mathcal{I}}
\newcommand{\jb}{\mathcal{J}}
\newcommand{\ab}{\mathcal{A}}
\newcommand{\bb}{\mathcal{B}} 

\newcommand{\hhat}{\hat h^0}
\newcommand{\Hhat}{\widehat H^0}

\newcommand{\xb}{\mathcal{X}}             % integral model
\newcommand{\cx}{\mathcal{X}}             % integral model

\newcommand{\lbb}{\overline{\mathcal{L}}}             % Line bundles with bar
\newcommand{\mbb}{\overline{\mathcal{M}}}
\newcommand{\nbb}{\overline{\mathcal{N}}}
\newcommand{\ebb}{\overline{\mathcal{E}}}
\newcommand{\tbb}{\overline{\mathcal{T}}}
\newcommand{\obb}{\overline{\mathcal{O}}}
\newcommand{\abb}{\overline{\mathcal{A}}}
\newcommand{\bbb}{\overline{\mathcal{B}}}

\newcommand{\lbt}{\widetilde{\mathcal{L}}}
\newcommand{\mbt}{\widetilde{\mathcal{M}}}

\newcommand{\chern}{\hat{c}_1}           % Chern classes with hat
\newcommand{\height}{h_{\lbb}}           % height
\newcommand{\cheight}{\hat{h}_{\lb}}     % canonical height

\newcommand{\lnorm}[1]{\|#1\|_{L^2}}     % L_2 norm
\newcommand{\supnorm}[1]{\|#1\|_{\mathrm{sup}}}    % sup norm
\newcommand{\chil}{\chi_{_{L^2}}}
\newcommand{\chisup}{\chi_{\sup}}

\newcommand{\amp}{\widehat{\mathrm{Amp}}(X)}   % Ample(X)   
\newcommand{\nef}{\widehat{\mathrm{Nef}}(X)}   
\newcommand{\eff}{\widehat{\mathrm{Eff}}(X)}   
\newcommand{\pic}{\widehat{\mathrm{Pic}}(X)}   
\newcommand{\bigx}{\widehat{\mathrm{Big}}(X)}

\newcommand{\body}{\Delta(L)}
\newcommand{\bodyin}{\Delta(L)^\circ}

\tableofcontents

\section{Introduction}
This paper uses convex bodies to study line bundles in the setting of Arakelov theory. The treatment is parallel to \cite{Yu2}, but the content is independent.

The method of constructing a convex body in Euclidean space, now called ``Okounkov body'', from a given algebraic linear series was due to Okounkov \cite{Ok1, Ok2}, and was explored systematically by Kaveh--Khovanskii \cite{KK} and Lazarsfeld--Musta\c t\v a \cite{LM}. Many important results of algebraic geometry can be derived from convex geometry through the bridge that the volume of the convex body gives the volume of the linear series.

Let $K$ be a number field, $\cx$ be an arithmetic variety of relative dimension $d$ over $O_K$, and $\lbb$ be a hermitian line bundle over $\cx$. There are two important arithmetic invariants $\hhat(\lbb)$ and $\chi(\lbb)$. 
Their growth under tensor powers are measured respectively by $\vol(\lbb)$ and $\volchi(\lbb)$. In \cite{Yu2}, we have introduced the Okounkov body $\Delta(\lbb) \subset \RR^{d+1}$ of $\lbb$, whose volume computes $\vol(\lbb)$. It is a natural arithmetic analogue of the construction in \cite{LM}. 

In the current paper, we use the usual Okounkov body $\Delta(\lb_K)\subset \RR^{d}$ of the generic fibre $\lb_K$ viewed as a line bundle on the projective variety $\cx_K$. Then we introduce the Chebyshev transform $c[\lbb]$ of $\lbb$, which is a convex function on $\Delta(\lb_K)$. We show that the Lebesgue integral of $c[\lbb]$ on $\Delta(\lb_K)$ gives $\volchi(\lbb)$ under some boundedness condition. For example, it is true in the case of toric varieties. We conjecture that it is true in the general case that the generic fibre $\lb_K$ is big.

The construction of $c[\lbb]$ is inspired by the work of Nystr\"om \cite{Ny}, and can be viewed as the global case of Nystr\"om's work. For each place $v$ of $K$, we introduce a local Chebyshev transform $c_v[\lbb]$, which is a real-valued convex function on $\Delta(\lb_K)$ depending on the $v$-adic metric of $\lb_K$ induced by the model $\lbb$. The archimedean case is exactly Nystr\"om's construction, and the non-archimedean case is analogous. Then the global Chebyshev transform is defined by $c[\lbb]=\sum_v c_v[\lbb].$

\subsection{Algebraic case}

We first review the construction of \cite{LM}. 
Instead of flag of subvarieties, we will use local coordinates as in \cite{Ny}. It actually gives a generic infinitesimal flag in the sense of \cite{LM}.

Let $X$ be a projective variety of dimension $d$ over a base field $K$.
Assume that there is a rational point $x_0\in X(K)$ in the smooth locus of $X$.
Let $t=(t_1,\cdots, t_d)$ be a system of parameters at $x_0$. In another word, 
$t_1,\cdots, t_d$ are elements of the maximal ideal $m_{x_0}\subset O_{X,x_0}$ 
whose images in $m_{x_0}/m_{x_0}^2$ form a $K$-basis of $m_{x_0}/m_{x_0}^2$.

Let $L$ be a line bundle over $X$. Fix a local section $s_0$ of $L$ around $x_0$ which does not vanish at $x_0$. It gives a trivialization $mL\subset O_{X,x_0}$ for any integer $m$. Here $mL$ means $L^{\otimes m}$, and we always write line bundles additively in this paper. In fact, 
for any local section $s$ of $mL$ at $x_0$, the quotient $s_0^{-m}s$ is a rational function on $X$ and we may identify $s$ with $s_0^{-m}s$. Consider the power series expansion 
$$
s=\sum_{\alpha\in \NN^d} a_\alpha t^\alpha.
$$
Here we use the convention $t^\alpha:=t_1^{\alpha_1}\cdots t_d^{\alpha_d}$ for any 
$\alpha=(\alpha_1, \cdots, \alpha_d)\in \NN^d$.
The smallest $\alpha\in \NN^d$ with respect to the lexicographic order of $\ZZ^d$ such that $a_\alpha\neq 0$, denoted by $\nu_t(s)$, is called the \emph{valuation of} $s$.

It follows that we have a map 
$$
\nu_t: H^0(X,mL)-\{0\} \rightarrow \NN^d.
$$
The \emph{Okounkov body} $\Delta_t(L)$ is defined to be the closure of 
$$\Lambda_t(L)=\bigcup_{m\geq 1}\frac{1}{m} \nu_t(H^0(X,mL))$$
in $\RR^d$. Here $\nu_t(H^0(X,mL))$ stands for $\nu_t\left(H^0(X,mL)-\{0\}\right)$ throughout this paper.

As shown by \cite{LM}, the Okounkov body is convex and bounded and satisfies
$$
\vol(\Delta_{t}(L)) =\frac{1}{d!}\vol(L). $$
Here we recall that the volume of $L$ is defined by
\begin{eqnarray*}
\vol(L)=\lim_{m\rightarrow \infty}  \frac{\dim_K H^0(X,mL)}{m^d/d!}.
\end{eqnarray*}
The existence of the above limit is an easy consequence of properties of the Okounkov body, and was implied by Fujita's approximation theorem in history. 

To prepare for the arithmetic case, we introduce one more notation. For any $\displaystyle \alpha\in \nu_t(H^0(X,L))$, define
$$H^0(X,L)(\alpha)= \{s \in H^0(X,L): s=t^{\alpha}+\mathrm{\ higher\ order\ terms} \}.$$ 
In another word, $H^0(X,L)(\alpha)$ consists of sections with valuation $\alpha$ and leading coefficient equal to 1.
Then $H^0(X,L)(\alpha)$ gives a way to normalize the sections.
Any $s\in H^0(X,L)$ is uniquely written in the form $\sum_{\alpha\in \nu_t(H^0(X,L))}b_{\alpha} s_{\alpha}$ with $b_{\alpha}\in K$ and $s_{\alpha}\in H^0(X,L)(\alpha)$. 

The valuation map $\nu_t$ and the Okounkov body $\Delta_t(L)$ depend on the choice of $x_0$ and $t$, but not on the choice of $s_0$. The set $H^0(X,L)(\alpha)$ will be multiplied by $s'_0(x_0)/s_0(x_0)\in K^\times$ if we change $s_0$ to another $s_0'$.

\subsection{Chebyshev transform}
To define the Chebyshev transform, it is more convenient to work on adelic metrized line bundles in the sense of Zhang \cite{Zh}. We briefly recall the definition. For more details, we refer to \cite{Zh} and \S\ref{preliminary} of the current paper.

Let $X$ be a projective variety of dimension $d$ over a number field $K$, and $L$ be a line bundle over $X$. For any place $v$ of $K$, by a \emph{$v$-adic metric} on $L$ we mean an assignment of a Galois invariant $\CC_v$-norm to the fibre $L_{\CC_v}(x)$ at any point $x\in X(\CC_v)$. Here $\CC_v$ denotes the completion of the algebraic closure of $K_v$. It induces the supremum norms on $H^0(X_{\CC_v}, L_{\CC_v})$ given by 
$$
\|s\|_{v,\sup}=\sup_{x\in X(\CC_v)}  \|s(x)\|_v.
$$
An \emph{adelic metric} on $L$ is a collection $\{\|\cdot\|_v\}_v$ of continuous $v$-adic metrics on $L$ over all places $v$ of $K$ satisfying certain extra conditions. We usually write $\bar L=(L, \{\|\cdot\|_v\}_v)$ and call it an \emph{adelic metrized line bundle}. 

Let $\bar L=(L, \{\|\cdot\|_v\}_v)$ be an adelic metrized line bundle on $X$ with $L$ big. As in the algebraic case, take a rational regular point $x_0\in X(K)$ which exists by enlarging $K$, take a local coordinate $t=(t_1,\cdots, t_d)$ at $x_0$, and take a base local section $s_0$ that induces trivialization of $L$. 
The Okounkov body $\Delta(L)=\Delta_{t}(L)$ is induced by the valuation map $\nu=\nu_{t}$ on the global sections.

Fix a place $v$. The local Chebyshev transform $c_v[\bar L]$ of $\bar L$ will be a real-valued function on $\Delta(L)$ constructed from $\|\cdot\|_v$ by the method of \cite{Ny}.
For any $\alpha\in \nu(H^0(X,L))$,
define the \emph{local discrete Chebyshev transform}
$$
F_v[\bar L](\alpha):= \inf_{s\in H^0(X_{K_v},L_{K_v})(\alpha)}  \log \|s\|_{v,\sup}.
$$
Here
$$H^0(X_{K_v},L_{K_v})(\alpha)= \{s \in H^0(X_{K_v},L_{K_v}): s=t^{\alpha}+\mathrm{\ higher\ order\ terms} \}$$ 
is introduced above.

Let $\{m_k\}_k$ be a sequence of positive integers tending to infinity, 
and let $\alpha_k\in \frac{1}{m_k}\nu(H^0(X,m_kL))$ be a sequence convergent to some
$\alpha\in \Delta(L)$ under the Euclidean topology. 
Then $c_v[\bar L](\alpha)$ is defined to be the limit
$$
\lim_{k\rightarrow\infty} \frac{1}{m_k} F_v[m_k\bar L](m_k\alpha_k).
$$

\begin{pro}\label{definition}
For any $\alpha$ in the interior $\Delta(L)^\circ$ of $\Delta(L)$, the above limit exists and does not depend on the sequence $\{(m_k,\alpha_k)\}$. Thus 
$$
c_v[\bar L](\alpha):= \lim_{k\rightarrow\infty} \frac{1}{m_k} F_v[m_k\bar L](m_k\alpha_k).
$$
is a well-defined function on $\Delta(L)^\circ$. 
Furthermore, it is convex and continuous.
\end{pro}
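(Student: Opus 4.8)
The plan is to prove well-definedness via a superadditivity (Fekete-type) argument on the discrete Chebyshev transform, then promote convergence from a single natural scaling to arbitrary sequences using convexity, and finally read off convexity and continuity of the limit. The key structural fact is that multiplication of sections respects leading terms: if $s\in H^0(X_{K_v}, mL_{K_v})(\alpha)$ and $s'\in H^0(X_{K_v}, m'L_{K_v})(\alpha')$, then $ss'\in H^0(X_{K_v},(m+m')L_{K_v})(\alpha+\alpha')$, because $\nu_t$ is a valuation (additive for products) and the leading coefficients multiply to $1$. Since the supremum norm is submultiplicative, $\|ss'\|_{v,\sup}\le \|s\|_{v,\sup}\,\|s'\|_{v,\sup}$, so taking logs and infima gives the superadditivity
\[
F_v[(m+m')\bar L](\alpha+\alpha') \le F_v[m\bar L](\alpha)+F_v[m'\bar L](\alpha').
\]
(For non-archimedean $v$ the sup norm is the right substitute for the $L^2$ norm used in the archimedean toric setting of \cite{Ny}; submultiplicativity still holds.) This is the engine of the whole argument.

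Next I would fix $\alpha\in\Delta(L)^\circ$ rational, say $\alpha=\beta/p$ with $\beta\in\nu(H^0(X,pL))$, and apply the superadditive version of Fekete's lemma to the sequence $a_k:=F_v[kp\bar L](k\beta)$ to conclude that $\frac1{kp}a_k$ converges to $\inf_k \frac1{kp}a_k$. One needs a lower bound to rule out $-\infty$: this follows from the standard fact (already implicit in \cite{Ny}) that for $\alpha$ in the interior, $\frac1m F_v[m\bar L](m\alpha)$ is bounded below, which one gets from boundedness of a fixed continuous metric on the compact adelic space $X(\CC_v)$ together with the fact that an interior lattice point direction is hit with uniformly positive "room" on all sides. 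The harder analytic input — and the step I expect to be the main obstacle — is passing from this fixed-denominator convergence to the convergence for an arbitrary sequence $(m_k,\alpha_k)$ with $\alpha_k\to\alpha$. Here I would use the superadditivity to compare $F_v$ at nearby lattice points: given $\alpha,\alpha'\in\Delta(L)^\circ$ with $\alpha+\alpha'\in\Delta(L)^\circ$ realized at level $m,m'$, one writes any large-level section near $\alpha_k$ as a product of a section contributing the bulk of the direction and a correction section, using that $L$ is big so that $H^0(X,mL)$ surjects onto enough jets near $x_0$ for $m\gg0$; this yields two-sided Lipschitz-type estimates $|\frac1m F_v[m\bar L](m\alpha) - \frac1{m'}F_v[m'\bar L](m'\alpha')|\le C|\alpha-\alpha'|$ uniformly on compact subsets of $\Delta(L)^\circ$, and the claimed limit independence follows by a standard interleaving/diagonal argument.

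Finally, convexity of $c_v[\bar L]$ on $\Delta(L)^\circ$ is immediate from superadditivity: for rational $\alpha,\alpha'$ and their midpoint, choosing near-optimal sections of equal large level $m$ and taking the product gives $F_v[2m\bar L](m(\alpha+\alpha'))\le F_v[m\bar L](m\alpha)+F_v[m\bar L](m\alpha')$, and dividing by $2m$ and passing to the limit yields $c_v[\bar L](\tfrac{\alpha+\alpha'}2)\le \tfrac12 c_v[\bar L](\alpha)+\tfrac12 c_v[\bar L](\alpha')$; midpoint convexity plus the measurability/boundedness already established upgrades to full convexity, and continuity on the open set $\Delta(L)^\circ$ is automatic for a finite convex function on an open convex subset of $\RR^d$. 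The one point requiring care throughout is that every estimate is confined to the interior, since near the boundary of $\Delta(L)$ the sections realizing a given direction may have norms blowing up, which is precisely why the proposition is stated only for $\Delta(L)^\circ$.
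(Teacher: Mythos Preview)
Your overall strategy---subadditivity of $F_v$ from multiplication of sections and submultiplicativity of the sup norm, a Fekete-type limit along a fixed denominator, a lower bound to exclude $-\infty$, and convexity/continuity at the end---is exactly the paper's approach, which in turn follows Nystr\"om \cite{Ny}. Two steps of your sketch, however, are not correctly justified.

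First, the lower bound. Boundedness of the metric on $X(\CC_v)$ together with ``room on all sides'' in the Okounkov body does not by itself force $\|s\|_{v,\sup}$ to be bounded below for $s\in H^0(X_{K_v},mL_{K_v})(m\alpha)$; the constraint is on a single Taylor coefficient at $x_0$, and one must exploit it locally. The paper isolates this as Lemma~\ref{lower bound}: on a small polydisc around $x_0$ one writes $\|s(x)\|_v=h(x)^m|s_0^{-m}s|_v$ with $h$ bounded below by some $A>0$, and the power series $s_0^{-m}s=t^{m\alpha}+\cdots$ has Gauss norm (non-archimedean) or maximum modulus (archimedean) at least $r^{|m\alpha|}$ on a polydisc of radius $r$. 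This yields $F_v[m\bar L](m\alpha)\ge C|(m\alpha,m)|$, which combined with boundedness of $\Delta(L)$ gives the uniform lower bound you need. Second, the passage from fixed-denominator convergence to arbitrary sequences $(m_k,\alpha_k)$ is not handled by ``surjection onto jets'': jet surjectivity at $x_0$ does not tell you that a given $m_k\alpha_k$ actually lies in $\nu(H^0(X,m_kL))$, nor does it produce a correction section of controlled norm in the right valuation slot. The correct device, used implicitly here and explicitly in the analogous global argument (Proposition~\ref{global limit}), is Khovanskii's theorem (Theorem~\ref{Khovanskii} and Corollary~\ref{convex geometry}): for any compact set inside $\Delta(L)^\circ$ there is a finitely generated sub-semigroup of $\Gamma$ covering it, so for $k$ large one can write $(m_k\alpha_k,m_k)=\sum_j a_j(n_j\beta_j,n_j)$ with $a_j\in\NN$ and a fixed finite list of $(n_j\beta_j,n_j)\in\Gamma$; subadditivity against these generators then gives the two-sided comparison and the sequence-independence of the limit. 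With these two corrections your outline matches the paper's proof.
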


If $v$ is archimedean, the result is exactly Theorem 1.1 and Theorem 1.2 of \cite{Ny}. The non-archimedean case is very similar. Note that our definition of $c_v$ differs from that in \cite{Ny} by a factor $2$, though a more natural definition should use a factor $-2$.

Go back to the globally metrized line bundle $\bar L$. We simply define
$$
c[\bar L]:=\sum_v c_v[\bar L].
$$
At every point of $\Delta(L)^\circ$, the sum has only finitely many non-zero terms. Then we check that $c[\bar L]$ defines a convex and continuous function on $\Delta(L)^\circ$. By the product formula, $c[\bar L]$ is independent of the choice of $s_0$.

\subsection{Integration and volume}

Let $(X,\bar L)$ be as above. The main result of this paper is as follows:

\begin{thm}\label{integration}
Assume that $L$ is big. Then the following are true:
\begin{itemize}
\item[(1)]
The function $c[\bar L]$ is integrable on $\Delta(L)$, and
$$
\int_{\Delta(L)} c[\bar L](\alpha)d\alpha
\leq -\frac{1}{d!}\vol_{\chi}(\bar L).
$$
\item[(2)]
If the function $\displaystyle \frac 1m \sum_v F_v[m\bar L]$ is uniformly bounded for all $m\geq 1$, then 
$$
\int_{\Delta(L)} c[\bar L](\alpha)d\alpha
=-\frac{1}{d!}\vol_{\chi}(\bar L).
$$
\end{itemize}
\end{thm}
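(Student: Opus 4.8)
The plan is to connect the integral of the Chebyshev transform to the arithmetic Euler characteristic $\chi(m\bar L)$ via a term-by-term analysis of the normalized sections $s_\alpha \in H^0(X,mL)(\alpha)$. The starting observation is that, since every $s \in H^0(X_{K_v},L_{K_v})$ decomposes uniquely as $\sum_{\alpha} b_\alpha s_\alpha$ with $s_\alpha \in H^0(X_{K_v},L_{K_v})(\alpha)$, the quantity $\sum_{\alpha \in \nu(H^0(X,mL))} F_v[m\bar L](\alpha)$ is (up to a controlled error) a measure of the ``size'' of the unit ball of $\|\cdot\|_{v,\sup}$ on $H^0(X_{K_v}, m L_{K_v})$ relative to the lattice/coordinate structure given by the $s_\alpha$. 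Summing over all $v$ and using the product formula, $\sum_v \sum_\alpha F_v[m\bar L](\alpha)$ should match $-\chi(m\bar L)$ up to an error that is $o(m^{d+1})$; the definition of $\chi$ as the logarithm of the covolume of $H^0(\cx, m\bar L)$ against the product of local unit balls makes this plausible. Dividing by $m^{d+1}/(d+1)$... more precisely, recognizing that $\#\nu(H^0(X,mL)) \sim \vol(L) m^d/d! = \mathrm{vol}(\Delta(L)) m^d$, and that $\frac{1}{m}F_v[m\bar L](m\alpha_k) \to c_v[\bar L](\alpha)$, a Riemann-sum argument gives $\frac{1}{m^{d+1}}\sum_\alpha \sum_v F_v[m\bar L](\alpha) \to \int_{\Delta(L)} c[\bar L](\alpha)\, d\alpha$.

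The key steps, in order, are as follows. First I would establish the \emph{lower bound direction} giving inequality (1): for each $m$, pick for each $\alpha \in \nu(H^0(X,mL))$ a near-optimal section $s_\alpha^{(v)}$ achieving $F_v[m\bar L](\alpha)$; but since we need a single global section, instead use the global section $s_\alpha \in H^0(X,mL)(\alpha)$ and bound $\sum_v \log\|s_\alpha\|_{v,\sup}$. One shows $\chi(m\bar L) \geq -\sum_\alpha \sum_v \log\|s_\alpha\|_{v,\sup} + O(\cdots)$ by constructing, from the $s_\alpha$'s, a sublattice of $H^0(\cx, m\bar L)$ whose covolume we can estimate, then using $\inf_{s_\alpha} \sum_v \log\|s_\alpha\|_{v,\sup} \geq \sum_v \inf F_v$ fails in general — rather, the inequality goes the right way because the global infimum over $\prod_v$-balls is at least the sum of local infima only trivially, so one must be careful about the direction. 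The cleanest route is: the arithmetic Euler characteristic $\chi(m\bar L)$ dominates (up to $o(m^{d+1})$) the quantity $-\sum_\alpha F[m\bar L](\alpha)$ where $F[m\bar L](\alpha) = \sum_v F_v[m\bar L](\alpha)$, because one can choose a near-optimal \emph{global} section $s_\alpha$ of valuation $\alpha$ simultaneously small at all places — this uses that for fixed $\alpha$ the $s_\alpha$ range over an affine subspace and the sum of sup-norms is a nice convex function on it whose minimum over global (hence over $K_v$-rational for all $v$) sections can be compared to the product of local minima by an adelic minimization / Minkowski-type argument. Then take $m \to \infty$, apply Proposition \ref{definition} and dominated convergence on $\Delta(L)^\circ$ (the boundary has measure zero), yielding (1). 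Second, for (2): the uniform boundedness of $\frac 1m \sum_v F_v[m\bar L]$ upgrades the one-sided estimate to an equality by controlling the reverse inequality $\chi(m\bar L) \leq -\sum_\alpha F[m\bar L](\alpha) + o(m^{d+1})$ — here the uniform bound lets one apply dominated convergence to pass $\frac{1}{m^{d+1}}\sum_\alpha F[m\bar L](m\alpha)$ to the integral without worrying about sections whose sup-norms blow up, and simultaneously shows that the small sections $s_\alpha$ actually \emph{generate} enough of $H^0(\cx, m\bar L)$ to make the covolume estimate tight.

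The main obstacle I anticipate is precisely the comparison between the \emph{global} (adelic) minimization problem $\inf \sum_v \log\|s\|_{v,\sup}$ over $s \in H^0(X,mL)(\alpha)$ and the \emph{sum of local} minimizations $\sum_v F_v[m\bar L](\alpha)$, together with converting this into an estimate on the lattice covolume $\chi(m\bar L)$. Locally one picks the best section at each $v$ independently, but a single global section cannot be simultaneously optimal everywhere; bridging this gap requires something like an effective version of strong approximation or a Minkowski-style lemma quantifying how the product of local unit balls in $H^0(X_{\AA}, mL)$ meets the global sections, with an error term that is genuinely $o(m^{d+1})$ — this is where the ``boundedness condition'' in part (2) must be doing its work. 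A secondary difficulty is the non-archimedean local analysis underlying Proposition \ref{definition} and the verification that only finitely many $c_v$ are nonzero at each interior point, but that is asserted as already established. I also expect to need care at the boundary $\partial\Delta(L)$, where $c_v$ may fail to be defined or may tend to $+\infty$; the integrability claim in (1) handles this, and one integrates over $\Delta(L)^\circ$ throughout, invoking that $\Delta(L)$ is the closure so the boundary is Lebesgue-null.
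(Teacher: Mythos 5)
Your high-level roadmap is correct: reduce everything to an asymptotic identity $\chi(m\bar L)\approx -\sum_{\alpha\in\Lambda_m(L)}\sum_v F_v[m\bar L](m\alpha)$ (the paper's Theorem~\ref{summation}), then pass to a Riemann sum and use monotonicity for part (1) and dominated convergence under the uniform bound for part (2). The last two steps you sketch agree with the paper's argument in \S 2.4, and your use of Proposition~\ref{definition} and Corollary~\ref{convex geometry} is appropriate.

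However, you misdiagnose the central difficulty and, more importantly, leave a genuine gap precisely where you locate it. You write that the ``main obstacle'' is comparing the adelic minimization $\inf_s\sum_v\log\|s\|_{v,\sup}$ over \emph{global} sections $s\in H^0(X,mL)(\alpha)$ with the sum of \emph{local} minima $\sum_v F_v[m\bar L](m\alpha)$, and you propose to bridge the gap by some unspecified effective strong-approximation or Minkowski-type lemma to find a global section simultaneously near-optimal at all places. No such bridge is built in the paper, and your sketch does not construct one; as you yourself half-observe, the naive inequality $\inf_s\sum_v\log\|s\|_v\ge\sum_v F_v$ goes the \emph{wrong} way for the estimate you want, so the route you propose stalls. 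The missing insight is that there is no global-versus-local tension at all, because $\chi$ (after replacing sup-norms by $L^2$-norms via Gromov's lemma, costing $O(m^d\log m)$) agrees up to $O(m^d\log m)$ with the arithmetic degree $\deg H^0(X,m\bar L)$, which is a \emph{covolume/determinant} and hence a sum of purely local terms by definition. Each local term is then computed \emph{exactly} by the local minimizers $e_{v,\alpha}\in H^0(X_{K_v},mL_{K_v})(m\alpha)$: because $e_{v,\alpha}$ and any global normal-form basis $s_\alpha$ both have leading term $t^{m\alpha}$, the transition matrix between $\{s_\alpha\}$ and $\{e_{v,\alpha}\}$ is unipotent upper-triangular in the lexicographic order, so the determinants (resp.\ lattice volumes) coincide; moreover the $e_{v,\alpha}$ are automatically $L^2$-orthogonal at archimedean $v$ (otherwise one could lower the norm by subtracting a higher-valuation term, contradicting minimality), and generate the local unit ball $B_v$ at non-archimedean $v$. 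This yields the exact identity $\deg H^0(X,m\bar L)=-\sum_v\sum_\alpha F'_v[m\bar L](m\alpha)$ (Proposition~\ref{fundamental identity}), with no strong-approximation input and no $o(m^{d+1})$ slack to account for in the adelic-to-local comparison. Without this determinant argument, or an actual replacement for it, your proof of Theorem~\ref{summation} --- and hence of both parts of the theorem --- is incomplete.

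One smaller point: for part (1) the relevant monotonicity is that $\frac{1}{m}F[m\bar L](m\alpha)$ decreases to $c[\bar L](\alpha)$ along multiples $m$, giving $\frac{1}{m}F[m\bar L](m\alpha)\ge c[\bar L](\alpha)$ directly; you do not need to produce sections that are small at all places to get this inequality once Theorem~\ref{summation} is in hand.
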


Now we explain the definition of the adelic volume $\vol_{\chi}(\bar L)$.
We first denote
\begin{eqnarray*}
\chi(\bar L) = \log \frac{\vol(B(\bar L))}{\vol(H^0(X,L)_{\AA_K}/H^0(X,L))}.
\end{eqnarray*}
Here $\AA_K=\prod_vK_v$ is the adele ring of $K$ and $H^0(X,L)_{\AA_K}$ is the adelization of $H^0(X,L)$.
The unit ball
$$
B(\bar L)=\prod_v B_v(\bar L)
$$
is a product of local unit balls
$$B_v(\bar L)=\{s\in H^0(X,L)_{K_v}: \|s\|_{v,\sup}\leq 1 \}.$$
Note that both $B(\bar L)$ and $H^0(X,L)_{\AA_K}/H^0(X,L)$ are compact, and the quotient of their volumes does
not depend on the Haar measure on $H^0(X,L)_{\AA_K}$.
Now we can define
\begin{eqnarray*}
\vol_{\chi}(\bar L)=\limsup_{m\rightarrow \infty}  \frac{\chi(m\bar L)}{m^{d+1}/(d+1)!}.
\end{eqnarray*}

Let $\cx$ be an arithmetic variety and $\lbb=(\lb, \|\cdot\|)$ be a hermitian line bundle over $\cx$.
Then an adelic metric $\{\|\cdot\|_v\}_v$ is induced on the line bundle $\lb_K$ over $\cx_K$. 
By the data $(\cx_K, \overline\lb_K=(\lb_K, \{\|\cdot\|_v\}_v) )$, we have the Chebyshev transform 
$c[\overline\lb_K]$ on $\Delta(\lb_K)$ depending on the choice of the local coordinate. 
In this case $\volchi(\overline\lb_K)$ is equal to 
\begin{eqnarray*}
\vol_{\chi}(\lbb)=\limsup_{m\rightarrow \infty}  \frac{\chi(m\lbb)}{m^{d+1}/(d+1)!}.
\end{eqnarray*}
Here the usual arithmetic Euler characteristic is defined by
\begin{eqnarray*}
\chi(\lbb) = \log \frac{\vol(B(\lbb))}{\vol(H^0(\cx,\lb)_{\RR}/H^0(\cx,\lb))}.
\end{eqnarray*}
with unit ball
$$B(\lbb)=\{ s\in H^0(\cx,\lb)_{\RR}: \supnorm{s}\leq 1  \}$$
is the corresponding unit ball. 

The ``limsup'' defining $\vol_{\chi}(\lbb)$ is proved to be a limit by Chen \cite{Ch} using Harder-Narasimhan filtrations.
In the case that Theorem \ref{integration} (2) is true, we can recover Chen's result by our treatment.

\subsection{Connection with previous results}
Let $(X,\bar L)$ be as above. Fix an archimedean place $v_0$. Change the metric of $\bar L$ at $v_0$, and denote the new metrized line bundle by $\bar L'$. Assume that Theorem \ref{integration} (2) is true for both line bundles. Then the difference of these two formulas agrees with Theorem 1.3 of \cite{Ny}.

We can also obtain a log-concavity result as in \cite{LM} and \cite{Yu2}. 
Assume that $c[\bar L]\leq 0$ and that Theorem \ref{integration} (2) is true for $\bar L$. 
From the Okounkov body $\Delta_t(L)\subset \RR^d$, we have the graph
$$
\{(\alpha, y)\in \RR^d\times \RR: \alpha\in \Delta_t(L)^\circ, 
\ 0\leq y\leq -c[\bar L](\alpha)\}.
$$
Denote $\widetilde\Delta_t(\bar L)$ to be the closure of the graph in $\RR^{d+1}$.
Then it gives a convex body in $\RR^{d+1}$ since $c[\bar L]$ is convex.
The volume of $\widetilde\Delta_t(\bar L)$ gives the arithmetic volume 
$\vol_{\chi}(\bar L)$. This convex body is very similar to the one constructed in \cite{Yu2}. 

Assume further that we have a metrized line bundle $\bar M$ on $X$ such that both $\bar M$ and $\bar L+\bar M$ satisfy similar conditions as above. Then it is easy to verify that 
$$\widetilde\Delta_t(\bar L)+ \widetilde\Delta_t(\bar M) \subset \widetilde\Delta_t(\bar L+\bar M).$$
As in \cite{Yu2}, the Brunn-Minkowski inequality gives 
$$
\vol_{\chi}(\bar L+\bar M)^{\frac{1}{d+1}}\geq \vol_{\chi}(\bar L)^{\frac{1}{d+1}}+\vol_{\chi}(\bar M)^{\frac{1}{d+1}}.
$$
If $\bar L$ and $\bar M$ are ample, then ``$\vol_{\chi}=\vol$'' in the above inequality and the result is the same as that in \cite{Yu2}.

\section{Chebyshev Transform}
In this section, we prove Proposition \ref{definition} and Theorem \ref{integration}.
Proposition \ref{definition} will be proved immediately in \S 2.2. The proof of Theorem \ref{integration} is given in 
\S 2.4 after some more preparations in \S 2.3.

\subsection{Conventions and prelimilary results}\label{preliminary}

\subsubsection*{Lexicographic order}

In this paper we denote $\NN=\{0,1,2,\cdots\}$. We endow $\NN^d$ with the usual lexicographic order. Namely, 
$(\alpha_1, \cdots, \alpha_d) < (\alpha'_1, \cdots, \alpha'_d)$ for two elements in $\NN^d$ if there is an $i\in \{1,\cdots, d\}$ such that
$(\alpha_1, \cdots, \alpha_{i-1})=(\alpha'_1, \cdots, \alpha'_{i-1})$ and $\alpha_i <\alpha_i'$.
The order is preserved by addition in the sense that $\alpha+\beta \leq \alpha'+\beta'$ if $\alpha \leq \alpha'$ and $\beta \leq \beta'$.

\subsubsection*{Adelic metrics}

We recall the notion of adelic metrized line bundle introduced by Zhang \cite{Zh}. 
In the end we impose one extra condition on the rationality of the value of the supremum norm at each place.

Let $X$ be a projective variety over a number field $K$, and $L$ be a line bundle over $X$. For any place $v$ of $K$, by a $v$-adic metric on $L$ we mean the assignment of a Galois invariant $\CC_v$-norm to the fibre $L_{\CC_v}(x)$ at any point $x\in X(\CC_v)$. Here $\CC_v$ denotes the completion of the algebraic closure of $K_v$. 

Let $\cx$ be an arithmetic variety and $\lbb=(\lb, \|\cdot\|)$ be a hermitian line bundle over $\cx$.
Then the generic fibre $\lb_K$ is a line bundle on $\cx_K$. 
For each place place $v$, we will have a natural $v$-adic metric $\|\cdot\|_v$ on $\lb_K$. 
If $v$ is archimedean, the metric is exactly the hermitian metric. 
If $v$ is non-archimedean, a point $x\in \cx(\CC_v)$ extends to $\tilde x: \mathrm{Spec}(O_{\CC_v})\rightarrow \cx_{O_{\CC_v}}$. Then $\tilde x^* \lb_{O_{\CC_v}}$
gives a lattice in $\lb_{\CC_v}(x)$, and thus induces a $\CC_v$-norm on $\lb_{\CC_v}(x)$. It gives $\|\cdot\|_v$. We call a collection $\{\|\cdot\|_v\}_v$ of $v$-adic metrics on $\lb_K$ over all places $v$ of $K$ obtained by this way an \emph{algebraic adelic metric} on $\lb_K$.

In general, let $(X, L)$ be as above. A collection $\{\|\cdot\|_v\}_v$ of $v$-adic metric on $L$ over all places $v$ of $K$ is called an \emph{adelic metric} if there is an algebraic adelic metric $\{\|\cdot\|'_v\}_v$ on $L$ satisfying the following coherence and continuity conditions:
\begin{itemize}
\item There exists a finite set $S$ of places of $K$ such that $\|\cdot\|_v=\|\cdot\|'_v$ for all $v\notin S$;
\item The quotient $\|\cdot\|_v/\|\cdot\|'_v$ is a continuous function on $X(\CC_v)$ for all places $v$.
\end{itemize}
We usually write $\bar L=(L, \{\|\cdot\|_v\}_v)$ and call it an \emph{adelic metrized line bundle}. 
It induces the supremum norms on $H^0(X_{\CC_v}, L_{\CC_v})$ given by 
$$
\|s\|_{v,\sup}=\sup_{x\in X(\CC_v)}  \|s(x)\|_v.
$$

In this paper, we assume further that the image of $\|\cdot\|_{v,\sup}: H^0(X_{K_v}, mL_{K_v})\to \RR$
is contained in the image of $|\cdot|_{v}: K_v\to \RR$ for any positive integer $m$ and any place $v$.
Algebraic adelic metrics automatically satisfy this condition. We assume this condition for \emph{adelic metrized line bundle} throughout this paper. 
Similar results may also hold for metrics without this condition if we replace $K_v$ by $\CC_v$ in the local considerations of this paper.

\subsubsection*{Approximation of convex cone}
We recall a theorem in Khovanskii \cite{Kh} and some consequences which will be used later. Similar results are used in \cite{LM} and \cite{Ny}.

Let $\Gamma$ be a sub-semigroup of $\NN^{d+1}$, and assume that $\Gamma$ generates $\ZZ^{d+1}$ as a group. Denote 
\begin{eqnarray*}
\Sigma(\Gamma)&=& \mathrm{\ closed\ convex\ cone\ of\ } \Gamma 
\mathrm{\ in\ } \RR^{d+1};\\
\Delta(\Gamma)&=& \Sigma(\Gamma)\cap (\RR^d\times \{1\});\\
\Gamma_m&=& \Gamma\cap (\NN^d\times \{m\}), \quad m\in \NN;\\
\Lambda_m(\Gamma)&=& \frac 1m \Gamma_m \subset \Sigma(\Gamma), \quad m\in \NN.
\end{eqnarray*}
We may also view $\Delta(\Gamma)$ as a subset of $\RR^d$, and $\Lambda_m(\Gamma)$ as a subset of $\QQ^d$. 
Then $\Delta(\Gamma)$ is a convex body of $\RR^d$ in the sense that it is convex and closed. 
The following result is a rephrasal of Proposition 3 in \S 3 of Khovanskii \cite{Kh}:
\begin{thm}[Khovanskii] \label{Khovanskii}
If $\Gamma$ is finitely generated as a semigroup, then there exists an element $\gamma\in \Gamma$ such that
$(\gamma+\Sigma(\Gamma))\cap \NN^{d+1} \subset \Gamma.$
\end{thm}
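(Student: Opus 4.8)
The plan is to deduce the statement from Gordan's lemma on rational polyhedral cones, after passing to the ``saturation'' of $\Gamma$. First I would observe that $\Sigma(\Gamma)$ is the cone generated by a finite generating set of $\Gamma$, hence is a rational polyhedral cone; by Gordan's lemma the saturated semigroup $\overline\Gamma:=\Sigma(\Gamma)\cap\ZZ^{d+1}$ is finitely generated, say by $g_1,\dots,g_r$. Since $\Gamma\subseteq\NN^{d+1}$ forces $\Sigma(\Gamma)\subseteq\RR^{d+1}_{\ge 0}$, for any $\gamma\in\overline\Gamma$ one has $(\gamma+\Sigma(\Gamma))\cap\NN^{d+1}=(\gamma+\Sigma(\Gamma))\cap\ZZ^{d+1}=\gamma+\overline\Gamma$, so it suffices to produce $\gamma\in\Gamma$ with $\gamma+\overline\Gamma\subseteq\Gamma$.

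Next I would clear a common denominator. Each $g_i$ lies in the cone generated by $\Gamma$, which is defined by finitely many rational linear inequalities; hence $g_i=\sum_j\lambda_{ij}\gamma_j$ with $\gamma_j\in\Gamma$ and $\lambda_{ij}\in\QQ_{\ge 0}$, and after clearing denominators there is a single positive integer $N$ with $Ng_i\in\Gamma$ for all $i$. Performing Euclidean division on the coefficients, any $x=\sum_i a_ig_i\in\overline\Gamma$ with $a_i\in\NN$ can be written $x=\sum_i q_i(Ng_i)+\sum_i b_ig_i$ with $q_i\in\NN$ and $0\le b_i<N$; the first term lies in $\Gamma$ whenever it is a nonempty sum, and the second lies in the \emph{finite} set $E:=\{\sum_i b_ig_i : 0\le b_i<N\}$.

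It remains to absorb $E$, and this is where the hypothesis that $\Gamma$ generates $\ZZ^{d+1}$ as a group is used. The subgroup generated by the nonempty semigroup $\Gamma$ equals $\Gamma-\Gamma$, so each $e\in E\subseteq\ZZ^{d+1}$ can be written $e=\gamma'_e-\gamma''_e$ with $\gamma'_e,\gamma''_e\in\Gamma$. Put $\gamma:=\sum_{e\in E}\gamma''_e\in\Gamma$. Then for each $e\in E$ we get $\gamma+e=\gamma'_e+\sum_{e'\ne e}\gamma''_{e'}\in\Gamma$. Combining this with the previous step: given $x\in\overline\Gamma$, write $x=z+e$ with $z\in\Gamma\cup\{0\}$ and $e\in E$; then $\gamma+x=z+(\gamma+e)\in\Gamma$, since $\gamma+e\in\Gamma$ and $\Gamma$ is closed under addition. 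Hence $\gamma+\overline\Gamma\subseteq\Gamma$, which is the assertion.

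The only genuinely nontrivial ingredient is Gordan's lemma (finite generation of $\Sigma(\Gamma)\cap\ZZ^{d+1}$); the rest is bookkeeping. The point to watch is the complementary roles of the two hypotheses: finite generation of $\Gamma$ is what makes $\overline\Gamma$ finitely generated and lets one extract the common denominator $N$, while the group-generation hypothesis is exactly what makes the finite remainder set $E$ absorbable into $\Gamma$. One should also dispose of the empty-sum edge cases (e.g. $\sum_i q_i=0$, in which case $x$ already lies in $E$), which are harmless because $\Gamma\ne\emptyset$.
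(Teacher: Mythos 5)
The paper does not actually prove this statement; it cites it as a rephrasal of Proposition~3 in \S3 of Khovanskii \cite{Kh}, so there is no internal argument to compare against. Judged on its own, your proof is correct and self-contained modulo Gordan's lemma. The reduction from $\NN^{d+1}$ to the saturation $\overline\Gamma=\Sigma(\Gamma)\cap\ZZ^{d+1}$ (using $\Sigma(\Gamma)\subseteq\RR^{d+1}_{\ge0}$), the clearing of denominators to get $Ng_i\in\Gamma$, the Euclidean-division decomposition $x=z+e$ with $z\in\Gamma\cup\{0\}$ and $e$ in the finite set $E$, and the absorption of $E$ via $\Gamma-\Gamma=\ZZ^{d+1}$ are all sound; the edge cases ($z=0$, the sum over $E$ being nonempty) are handled correctly. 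The one place worth tightening is the justification that each $g_i$ is a non-negative \emph{rational} combination of a generating set $\gamma_1,\dots,\gamma_n$ of $\Gamma$: the clause ``defined by finitely many rational linear inequalities'' is the dual description of the cone and does not by itself yield the primal statement. What you want is either that the polyhedron $\{\lambda\in\RR^n_{\ge0}:\sum_j\lambda_j\gamma_j=g_i\}$ is a nonempty rational polyhedron and hence contains a rational point, or Carath\'eodory's theorem for cones, which represents $g_i$ as a combination with support in a linearly independent subset of the $\gamma_j$, and such coefficients are forced to be rational since they solve an integer linear system. Either repair is routine, and the rest of the argument then goes through as written.
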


\begin{cor}\label{convex geometry}
For any convex body $D$ contained in $\Delta(\Gamma)^\circ$, there exists $m_0>0$ such that 
$$D\cap \Lambda_m(\Gamma)=D\cap \frac 1m \NN^d$$
for all integers $m>m_0$.
\end{cor}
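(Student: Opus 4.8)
The plan is to derive Corollary~\ref{convex geometry} from Khovanskii's theorem (Theorem~\ref{Khovanskii}) together with a compactness argument. The key geometric input is that $\Sigma(\Gamma)$ is a closed convex cone whose cross-section at height~$1$ is the convex body $\Delta(\Gamma)$, and that for a point $\alpha$ in the \emph{interior} $\Delta(\Gamma)^\circ$, a whole neighbourhood of the ray through $(\alpha,1)$ lies inside $\Sigma(\Gamma)$. First I would fix the distinguished element $\gamma\in\Gamma$ provided by Theorem~\ref{Khovanskii}, so that $(\gamma+\Sigma(\Gamma))\cap\NN^{d+1}\subset\Gamma$. The strategy is then: given $x\in D\cap\tfrac1m\NN^d$, write $m(x,1)=(\beta,m)\in\NN^{d+1}$ and try to show $(\beta,m)\in\Gamma$ by exhibiting it in $\gamma+\Sigma(\Gamma)$; since it already lies in $\NN^{d+1}$, Khovanskii's conclusion finishes the job. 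Dividing back by $m$ then shows $x\in\Lambda_m(\Gamma)$, and the reverse inclusion $\Lambda_m(\Gamma)\subset\tfrac1m\NN^d$ is immediate from $\Gamma\subset\NN^{d+1}$.

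The heart of the matter is the membership $(\beta,m)\in\gamma+\Sigma(\Gamma)$, i.e. $(\beta,m)-\gamma\in\Sigma(\Gamma)$. Write $\gamma=(\gamma',\gamma_{d+1})$. For large $m$ the height of $(\beta,m)-\gamma$ is $m-\gamma_{d+1}>0$, so it suffices to check that the rescaled cross-section point $\tfrac1{m-\gamma_{d+1}}(\beta-\gamma')$ lies in $\Delta(\Gamma)$. Now $\tfrac1m\beta=x\in D$, and as $m\to\infty$ the point $\tfrac1{m-\gamma_{d+1}}(\beta-\gamma')$ differs from $x$ by $O(1/m)$ uniformly in $x\in D$ (since $D$ is bounded and $\gamma$ is fixed). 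Because $D$ is a compact subset of the open set $\Delta(\Gamma)^\circ$, there is a fixed $\varepsilon>0$ with the $\varepsilon$-neighbourhood of $D$ still contained in $\Delta(\Gamma)$; choosing $m_0$ large enough that the $O(1/m)$ error is below $\varepsilon$ for all $m>m_0$ gives the required containment uniformly. This $m_0$ depends only on $D$, $\gamma$, and the diameter of $\Delta(\Gamma)$, not on the individual point $x$, which is exactly what the statement demands.

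I would present this as two short lemmas feeding into the corollary: one recording that $D\subset\Delta(\Gamma)^\circ$ compact implies a uniform interior gap $\varepsilon$, and one doing the affine bookkeeping that turns the height shift by $\gamma_{d+1}$ and the translation by $\gamma'$ into an $O(1/m)$ perturbation of the cross-section. The only mild subtlety is the edge case where $\Delta(\Gamma)$ is lower-dimensional or $\Gamma$ fails to generate $\ZZ^{d+1}$; the hypotheses in the statement of Theorem~\ref{Khovanskii} and the paragraph preceding Corollary~\ref{convex geometry} already assume $\Gamma$ generates $\ZZ^{d+1}$ and is finitely generated, so $\Delta(\Gamma)$ is a genuine $d$-dimensional convex body and the interior argument is unobstructed.

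The step I expect to be the main obstacle is the uniformity: one must be careful that the threshold $m_0$ can be chosen independently of $x\in D$. This is handled by the compactness of $D$ (giving the uniform gap $\varepsilon$) combined with the boundedness of the perturbation estimate, but it is the place where a careless argument would only yield an $m_0$ depending on each point. Everything else---the reduction to a cross-section membership, the application of Khovanskii's theorem, and the trivial reverse inclusion---is routine.
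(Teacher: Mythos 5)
Your argument for the case when $\Gamma$ is finitely generated is correct and fills in the detail that the paper compresses into ``it is immediately true\dots by the above theorem'': you exhibit $(\beta,m)-\gamma$ as an element of $\Sigma(\Gamma)$ by showing that its height-$1$ cross-section is an $O(1/m)$ perturbation of $x\in D$, and then use the compactness of $D$ inside the open set $\Delta(\Gamma)^\circ$ to get a uniform $m_0$. That reasoning is sound.

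However, there is a genuine gap. You assert in your last paragraph that the hypotheses of the corollary already include finite generation of $\Gamma$, pointing to Theorem~\ref{Khovanskii} and the paragraph before the corollary. That is a misreading: the standing hypothesis before the corollary is only that $\Gamma$ is a sub-semigroup of $\NN^{d+1}$ generating $\ZZ^{d+1}$ as a group; finite generation appears \emph{only} as an extra hypothesis in Theorem~\ref{Khovanskii}, not in Corollary~\ref{convex geometry}. And the corollary must hold without it, because in the intended application $\Gamma=\bigcup_{m\geq 1}\nu(H^0(X,mL))\times\{m\}$ is generally not finitely generated. Your proof therefore never produces the distinguished element $\gamma$ in the general case: Khovanskii's theorem simply does not apply to $\Gamma$ as given. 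The missing step is the reduction the paper carries out: choose a finitely generated sub-semigroup $\Gamma'\subset\Gamma$ that still generates $\ZZ^{d+1}$ and whose Okounkov body $\Delta(\Gamma')$ is large enough that $D\subset\Delta(\Gamma')^\circ$; run your argument for $\Gamma'$ to conclude $D\cap\frac1m\NN^d\subset\Lambda_m(\Gamma')\subset\Lambda_m(\Gamma)$; the reverse inclusion $\Lambda_m(\Gamma)\subset\frac1m\NN^d$ is, as you noted, trivial. Without this reduction your proof only establishes a special case of the statement.
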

\begin{proof}
It is immediately true if $\Gamma$ is finitely generated by the above theorem.
If $\Gamma$ is arbitrary, we can find a finitely generated sub-semigroup $\Gamma'\subset \Gamma$ such that 
$\Delta(\Gamma')^\circ$ contains $D$. Then apply the theorem on $\Gamma'$.
\end{proof}

For applications to $(X,\bar L)$ as in the introduction, we will take
$$\Gamma= \bigcup_{m\geq 1} \nu(H^0(X,mL))\times \{m\}.$$
The related notations are translated as: 
\begin{eqnarray*}
\Delta(\Gamma)&=&\Delta(L) \mathrm{\ Okounkov\ body};\\
\Gamma_m&=& \nu(H^0(X,mL))\times \{m\};\\
\Lambda_m(\Gamma)&=& \Lambda_m(L)\times \{1\}.
\end{eqnarray*}
By \cite[Lemma 2.2]{LM}, it generates the group $\ZZ^{d+1}$ if $L$ is big. Hence we can use the results above.

\subsection{Basic properties}
Resume the notations in the introduction. That is, $X$ is a projective variety over $K$ of dimension $d$, and 
$\bar L=(L, \{\|\cdot\|_v\}_v)$ is an adelic metrized line bundle on $X$ with $L$ big. As in the introduction, take a rational regular point $x_0\in X(K)$ which exists by enlarging $K$, take a local coordinate $t=(t_1,\cdots, t_d)$ at $x_0$, and a base local section $s_0$ that induces a trivialization of $L$. 
Then we have a valuation map $\nu=\nu_{t}$ on the global sections, 
and the Okounkov body $\Delta(L)=\Delta_{t}(L)$ is the closure of 
$$\Lambda(L)=\bigcup_{m\geq 1}\frac{1}{m} \nu(H^0(X,mL))$$
in $\RR^d$. 
Denote 
$$\Lambda_m(L)=\frac{1}{m} \nu(H^0(X,mL)).$$
It is a finite subset of $\Lambda(L)$.

We first show Proposition \ref{definition} following the method of \cite{Ny}. 
Let $v$ be any place of $K$. For any positive integer $m$ and $\alpha\in \Lambda_m(L)$, we would like to give a lower bound of
$$
F_v[m\bar L](m\alpha)= \inf_{s\in H^0(X_{K_v},mL_{K_v})(m\alpha)} \{  \log \|s\|_{v,\sup} \},
$$
where
$$H^0(X_{K_v},mL_{K_v})(m\alpha)= \{s \in H^0(X_{K_v},mL_{K_v}): 
s=t^{m\alpha}+\mathrm{\ higher\ order\ terms} \}.$$

Denote by $|(m\alpha,m)|$ the sum of all components of $m\alpha$ and $m$.
The following result includes the non-archimedean case of \cite[Lemma 5.4]{Ny}. 

\begin{lem}\label{lower bound}
There exists a constant $C$ independent of $(m, \alpha)$ such that 
$$
F_v[m\bar L](m\alpha)\geq  C|(m\alpha,m)|.
$$
\end{lem}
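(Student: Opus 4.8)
The plan is to reduce the bound to a Cauchy‑type coefficient estimate on a fixed small polydisc around $x_0$, following verbatim the archimedean argument of \cite[Lemma 5.4]{Ny} and supplying the non‑archimedean analogue through the maximum modulus principle for Tate algebras. Write $X^{\mathrm{an}}_{\CC_v}$ for the complex‑analytic space when $v$ is archimedean and for the Berkovich analytification otherwise. First I would fix, once and for all, an open neighbourhood $U$ of $x_0$ in $X^{\mathrm{an}}_{\CC_v}$ on which $s_0$ trivialises $L$ and on which the coordinates $t=(t_1,\dots,t_d)$ induce an isomorphism $t\colon U\xrightarrow{\ \sim\ }D(0,r)^d$ onto an open polydisc; such a $U$ exists because $x_0$ is a smooth point and the $t_i$ form a system of parameters there. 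Choose $\rho$ with $0<\rho<\min(r,1)$, taking $\rho\in|\CC_v^\times|$ when $v$ is non‑archimedean, and set $\bar U_\rho=\{x\in U:|t_i(x)|_v\le\rho\text{ for all }i\}$. By continuity of the metric $\|\cdot\|_v$ and compactness of $\bar U_\rho$ (for archimedean $v$), resp. of $X^{\mathrm{an}}_{\CC_v}$ in the Berkovich sense (for non‑archimedean $v$), there is a constant $\epsilon_0>0$, depending only on $\bar L$ and the auxiliary choices, with $\|s_0(x)\|_v\ge\epsilon_0$ for all $x\in\bar U_\rho$.

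Next, let $s\in H^0(X_{K_v},mL_{K_v})(m\alpha)$ be arbitrary and put $f=s_0^{-m}s$. Since $s$ is a global regular section and $s_0^{-m}$ trivialises $mL$ over $U$, $f$ is regular on $U$, and via $t\colon U\cong D(0,r)^d$ it is represented by a power series $f=\sum_{\gamma\in\NN^d}a_\gamma t^\gamma$ converging on $D(0,r)^d$, hence in particular on $\bar U_\rho$, with $a_{m\alpha}=1$ by the normalisation $s=t^{m\alpha}+(\text{higher order terms})$. The central step is the estimate
$$
\sup_{x\in\bar U_\rho}|f(x)|_v\ \ge\ |a_{m\alpha}|_v\,\rho^{|m\alpha|}\ =\ \rho^{|m\alpha|}.
$$
When $v$ is archimedean this is the multivariable Cauchy inequality on the distinguished boundary $\{|t_i|=\rho\}$ of $\bar U_\rho$; when $v$ is non‑archimedean it is the maximum modulus principle on the affinoid $\bar U_\rho$, whose spectral norm is the Gauss norm $\max_\gamma|a_\gamma|_v\rho^{|\gamma|}$, realised on $\CC_v$‑points because $\CC_v$ is algebraically closed and $\rho\in|\CC_v^\times|$.

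Finally I would combine the two inequalities. For $x\in\bar U_\rho\cap X(\CC_v)$ one has $\|s(x)\|_v=|f(x)|_v\,\|s_0(x)\|_v^{\,m}\ge\epsilon_0^{\,m}\,|f(x)|_v$, whence
$$
\|s\|_{v,\sup}\ \ge\ \sup_{x\in\bar U_\rho}\|s(x)\|_v\ \ge\ \epsilon_0^{\,m}\,\rho^{|m\alpha|},
$$
so that $\log\|s\|_{v,\sup}\ge m\log\epsilon_0+|m\alpha|\log\rho\ge C\,(m+|m\alpha|)=C\,|(m\alpha,m)|$, where $C:=\min(\log\epsilon_0,\log\rho)<0$ is independent of $m$, $\alpha$ and $s$ (it does depend on $v$). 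Taking the infimum over all admissible $s$ gives the lemma. The only genuine obstacle is the non‑archimedean input: one must phrase both the coefficient estimate and the lower bound on $\|s_0\|_v$ in terms of affinoid geometry (Gauss norms, the maximum modulus principle, compactness of the analytification) rather than the naive topology on $X(\CC_v)$, which is not locally compact; once this dictionary is in place the computation is the same as in Nyström's archimedean case.
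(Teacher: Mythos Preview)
Your proof is correct and follows essentially the same route as the paper's: localize to a polydisc around $x_0$, use a positive lower bound for $\|s_0\|_v$ there to get the factor $\epsilon_0^m$ (the paper's $A^m$), and then bound $|s_0^{-m}s|_v$ below by $\rho^{|m\alpha|}$ via the Gauss norm (non-archimedean) or Cauchy inequality (archimedean). Your invocation of Berkovich compactness to secure the lower bound on $\|s_0\|_v$ is slightly more careful than the paper, which simply asserts a positive lower bound for the continuous function $h=\|s_0\|_v$ on the closed polydisc; either way the argument and the resulting constant $C=\min(\log\epsilon_0,\log\rho)$ are the same.
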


\begin{proof}
The archimedean case is just Nystr\"om's result. 
The non-archimedean case is proved similarly.
Let $v$ be non-archimedean.
We will show that there is a constant $C$ such that 
$$\log \|s\|_{v,\sup} \geq C|(m\alpha,m)|, \quad \forall s\in H^0(X_{K_v},mL_{K_v})(m\alpha).$$
The coordinate $t=(t_1, \cdots, t_d)$ maps points near $x_0$ in 
$X(\CC_v)$ to $\CC_v^d$. 
We can find an closed neighborhood $U$ of $x_0$ which is 
bijective to the closed polydisc $D$ of radius $r>0$ in $\CC_v^d$.
As in the archimedean case, we have 
$$
\|s(x)\|_v= h(x)^m |s_0^{-m}s|_v, \quad \forall x\in U.
$$
Here $h$ is some positively-valued continuous function on $U$. 
Let $A>0$ be a positive lower bound of $h$ on $U$.
It follows that 
$$
\|s\|_{v,\sup}\geq \sup_{x\in U} \{ h(x)^m |s_0^{-m}s|_v\}
\geq A^m \sup_{x\in U} |s_0^{-m}s|_v.
$$
View $s_0^{-m}s$ as a convergent power series on $D$. It is of the form: 
$$t^{m\alpha}+ \mathrm{\ higher\ order\ terms} .$$
Its maximal absolute value on $D$ is equal to its Gauss norm. By definition, the Gauss norm is
greater than or equal to $r^{|m\alpha|}$.   
It finishes the proof.
\end{proof}

With the above lemma, Proposition \ref{definition} is proved in the same way as \cite[Proposition 5.6]{Ny}. We omit the details here.
Now we want to have some bound on $c_v[\bar L]$ by varying $v$. 
In the following lemma, ``almost all places'' means ``all but finitely many places''.

\begin{lem} \label{trivial bounds}
The following are true:
\begin{itemize}
\item[(1a)] For all places $v$, $\displaystyle \frac{1}{m} F_v[m\bar L](m\alpha)$ has a lower bound independent of $(m,\alpha)$. 
\item[(1b)] For all places $v$, $c_v[\bar L]$ is bounded from below.
\item[(2a)] For almost all places $v$, we have $F_v[m\bar L] \geq 0$ for all $m\geq 1$.
\item[(2b)] For almost all places $v$, and $c_v[\bar L]\geq 0$.
\item[(3a)] For any $\alpha \in \Lambda(L)$, there is a finite set $S$ of places of $F$ such that
$F_v[m\bar L](m\alpha)=0$ for all places $v\notin S$ and all positive integer $m$ such that $\alpha\in \Lambda_m(L)$.
\item[(3b)] For any $\alpha \in \Delta(L)^\circ$, the value $c_v[\bar L](\alpha)=0$ 
for all but finitely many places $v$.
\end{itemize}
\end{lem}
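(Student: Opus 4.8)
The plan is to prove the six assertions of Lemma~\ref{trivial bounds} in the natural dependency order, deriving the ``$b$'' statements (about $c_v[\bar L]$) from the ``$a$'' statements (about $F_v[m\bar L]$) by passing to the limit that defines $c_v[\bar L]$ in Proposition~\ref{definition}, and deriving the ``almost all'' and ``finitely many'' statements from a single auxiliary observation: the chosen data $(x_0, t, s_0)$ extend to an integral model over $O_K[1/N]$ for some $N$, and over such a model the section $t^{m\alpha}+\cdots$ has supremum norm exactly $1$ at all places of good reduction.

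First I would prove (1a). For a fixed $v$ this is immediate from Lemma~\ref{lower bound}: dividing by $m$ gives $\frac1m F_v[m\bar L](m\alpha)\geq C\frac{|(m\alpha,m)|}{m}=C(|\alpha|+1)$, and since $\alpha$ ranges over the bounded set $\Delta(L)$, the quantity $C(|\alpha|+1)$ has a lower bound independent of $(m,\alpha)$; take $C$ negative without loss of generality so the bound is $C(\operatorname{diam}+1)$ or similar. Then (1b) follows: for $\alpha\in\Delta(L)^\circ$ choose any sequence $(m_k,\alpha_k)$ with $\alpha_k\to\alpha$, apply (1a) termwise to $\frac1{m_k}F_v[m_k\bar L](m_k\alpha_k)$, and take $k\to\infty$ using Proposition~\ref{definition}; the lower bound survives the limit. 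Next, for the ``almost all'' statements, I would fix once and for all an integral model: spread out $X$, $x_0$, $t_1,\dots,t_d$, $s_0$ and the reference algebraic adelic metric $\{\|\cdot\|'_v\}_v$ to a flat projective scheme $\mathcal X \to \operatorname{Spec} O_K[1/N]$ with a section $\tilde x_0$, a line bundle $\mathcal L$, and a trivializing section, enlarging the finite bad set to also contain the set $S$ where $\|\cdot\|_v\neq\|\cdot\|'_v$. For $v\notin$ this bad set and for $s=t^{m\alpha}+\cdots\in H^0(X_{K_v},mL_{K_v})(m\alpha)$, the function $s_0^{-m}s$ is a regular function on a Zariski neighborhood of $\tilde x_0$ in the special fibre whose value at $\tilde x_0$ reduces to $1$; hence $|s_0^{-m}s|$ attains the value $1$ at the reduction point and $\|s\|_{v,\sup}\geq 1$, i.e.\ $\log\|s\|_{v,\sup}\geq 0$. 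Taking the infimum over such $s$ gives $F_v[m\bar L](m\alpha)\geq 0$, which is (2a); then (2b) follows by the same limiting argument as (1b).

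For (3a), fix $\alpha\in\Lambda(L)$, so $\alpha\in\Lambda_{m_0}(L)$ for some $m_0$ and there is a section $s^{(0)}\in H^0(X,m_0L)(\alpha_{m_0})$ defined over $K$. For any $m$ with $\alpha\in\Lambda_m(L)$ we have $m=k m_0$ is generally false, so instead I would argue as follows: $\nu(H^0(X,mL))\ni m\alpha$ means there is a global section $s\in H^0(X,mL)$ defined over $K$ (not merely $K_v$) with $\nu_t(s)=m\alpha$; rescaling, we get $s\in H^0(X,mL)(m\alpha)$ over $K$. This single section $s$ has a supremum norm $\|s\|_{v,\sup}$ at every place, and by the product-formula-type spreading-out argument above, $\|s\|_{v,\sup}=1$ for all $v$ outside a finite set $S_s$ depending on $s$ (the denominators of the coefficients of $s_0^{-m}s$ together with the bad set for the model). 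Hence $F_v[m\bar L](m\alpha)\leq 0$ for $v\notin S_s$, and combined with (2a) we get $F_v[m\bar L](m\alpha)=0$ for $v$ outside the union of $S_s$ with the bad set. The subtlety is that as $m$ varies (with $\alpha$ fixed) the section $s$ varies, so $S_s$ could grow; to get a \emph{single} finite $S$ I would take $s^{(0)}\in H^0(X,m_0 L)(\alpha)$ and use its powers: for $m=k m_0$, the section $(s^{(0)})^{\otimes k}\in H^0(X,mL)(k\alpha)$ after reindexing lies in $H^0(X,mL)(m\cdot\frac{\alpha}{m_0}\cdot\frac{m_0}{?})$—here I need to be careful that $\nu_t$ is additive, $\nu_t((s^{(0)})^k)=k\,\nu_t(s^{(0)})$, so this handles all $m$ in the subsemigroup $m_0\NN$; for general $m$ with $\alpha\in\Lambda_m(L)$ one writes the relevant valuation point as a sum of such, using that $\Gamma$ is a semigroup and $(\gamma+\Sigma(\Gamma))\cap\NN^{d+1}\subset\Gamma$ (Khovanskii, Theorem~\ref{Khovanskii}) to reduce to finitely many ``generators,'' each contributing its own finite bad set, and $S$ is the union. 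Finally (3b) follows from (3a) by the limit: given $\alpha\in\Delta(L)^\circ$, pick a rational point $\alpha'\in\Lambda(L)$ close to $\alpha$ inside $\Delta(L)^\circ$; actually, more directly, for $\alpha\in\Delta(L)^\circ$ with $\alpha\in\Lambda(L)$ the value $c_v[\bar L](\alpha)=\lim\frac1{m_k}F_v[m_k\bar L](m_k\alpha)=0$ for $v\notin S$ by (3a) applied along $m_k\in m_0\NN$; the general $\alpha\in\Delta(L)^\circ$ is handled by continuity of $c_v[\bar L]$ together with the fact that the bad set for a small convex neighborhood $D\subset\Delta(L)^\circ$ is finite—here one invokes Corollary~\ref{convex geometry} to see that $D$ contains rational points of $\Lambda_m(L)$ for all large $m$ with controlled denominators, hence the bad set is uniform over $D$.

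The main obstacle is the uniformity in (3a): producing \emph{one} finite set $S$ of places that works simultaneously for all $m$ with $\alpha$ fixed, rather than a set depending on $m$. The clean route is to exploit that $c_v$ only depends on the asymptotic behavior, so it suffices to control $F_v[m\bar L](m\alpha)$ along the arithmetic progression $m\in m_0\NN$ via tensor powers of a single section $s^{(0)}$, whose bad set $S_{s^{(0)}}$ is then fixed once and for all; the general-$m$ case of (3a) as literally stated then requires the Khovanskii-type semigroup argument to express arbitrary valuation points as bounded sums of generators. Everything else is either a direct application of Lemma~\ref{lower bound}, a routine ``spreading out over $O_K[1/N]$'' argument, or passage to the limit justified by Proposition~\ref{definition} and the continuity of $c_v[\bar L]$.
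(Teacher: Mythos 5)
Your overall strategy lines up with the paper's: derive the $b$-statements from the $a$-statements by passing to the limit, treat (1a) via Lemma~\ref{lower bound} and boundedness of the Okounkov body, reduce (2a)/(3a) to the algebraic case by spreading out, and control the variation in $m$ via a semigroup argument. However, there are three places where your execution goes astray or leans on claims that are either false or circular.

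First, in (2a) you assert that $s_0^{-m}s$ ``is a regular function \dots\ whose value at $\tilde x_0$ reduces to $1$; hence $|s_0^{-m}s|$ attains the value $1$ at the reduction point.'' This is false when $m\alpha\neq 0$: $s_0^{-m}s=t^{m\alpha}+\cdots$ vanishes at $x_0$ (and hence at $\tilde x_0$), so its value there is $0$, not $1$. What is true is that the leading coefficient of the power-series expansion is $1$, so the reduction of $s$ modulo $v$ does not vanish identically on the special fibre. The paper makes this precise by arguing on the model: if $\|s\|_{v,\sup}<1$ then $\divv(s)$ on $\cx$ contains $\cx_{\FF_v}$ with positive multiplicity, which is impossible once $v$ is chosen so that $\cx_{\FF_v}$ is irreducible and has multiplicity zero in $\divv(s_0)$ and all $\divv(t_i)$, since $s=s_0^m t^{m\alpha}(1+\cdots)$. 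Equivalently one can use the Gauss norm: all coefficients are $v$-integral and the leading one equals $1$, so the Gauss norm on a unit polydisc is $1$; but the pointwise evaluation at $\tilde x_0$ does not work.

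Second, in (3a) you invoke Khovanskii's theorem to ``reduce to finitely many generators,'' but this is the wrong tool and overcomplicated. Since $\alpha$ is fixed, the relevant set is $M(\alpha)=\{m\geq 1:\alpha\in\Lambda_m(L)\}$, which is a sub-semigroup of $\NN$, hence finitely generated by an elementary fact about numerical semigroups. The paper's argument is exactly this: each $m$ yields a finite $S(m)$ (by choosing a global $K$-rational section $s\in H^0(X,mL)(m\alpha)$, which has $\|s\|_{v,\sup}=1$ away from the places dividing its coefficient denominators), $S(m_1+m_2)\subset S(m_1)\cup S(m_2)$ via the tensor product $s_1\otimes s_2$, and one takes $S$ to be the union over the finitely many semigroup generators of $M(\alpha)$. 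Your version restricted to $m\in m_0\NN$ is essentially a special case, but the Khovanskii cone approximation is neither necessary nor the right way to cover the remaining $m$.

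Third, your handling of (3b) for general $\alpha\in\Delta(L)^\circ\setminus\Lambda(L)$ is circular: you invoke ``the fact that the bad set for a small convex neighborhood $D$ is finite,'' but that uniform-bad-set claim is exactly what needs to be shown (it is the content of the remark immediately following the lemma in the paper, which itself depends on the lemma). Continuity of $c_v[\bar L]$ does not give it to you because (3a) produces a different finite $S$ for each $\alpha'\in\Lambda(L)$. The paper's fix is a short convexity argument: choose $\beta,\gamma\in\Lambda(L)$ with $\alpha$ on the open segment $[\beta,\gamma]$; by (3a), $c_v[\bar L](\beta)=c_v[\bar L](\gamma)=0$ outside a finite set $S$; convexity then gives $c_v[\bar L](\alpha)\leq 0$ for $v\notin S$, and combining with (2b) (namely $c_v[\bar L]\geq 0$ for almost all $v$) forces $c_v[\bar L](\alpha)=0$ for almost all $v$. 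This uses only finitely many applications of (3a), sidestepping the uniformity issue.
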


\begin{proof}
It is easy to see that (1b) and (2b) are implied by (1a) and (2a) respectively. From (3a) to (3b) it requires a little argument.

By Lemma \ref{lower bound}, 
$$ \frac{1}{m} F_v[m\bar L](m\alpha)> C|(\alpha, 1)|, \quad \alpha\in\Lambda_m(L).$$
The right-hand side is bounded since the Okounkov body is bounded. It proves (1a)

Now we prove (2a) and (3a). It will not impact the truth of the results if we change the adelic metric of $\bar L$ at finitely many places. Thus we can assume that the adelic metric on $\bar L$ is algebraic, i.e., it is induced by an integral model $(\cx, \lbb)$ of $(X,L)$. 

We first show (2a).
Let $v$ be a non-archimedean place such that the fibre $\cx_{\FF_v}$ of $\cx$ above $v$ is irreducible and has multiplicity zero in 
$\divv(s_0)$ and all $\divv(t_i)$ as divisors on $\cx$. It only excludes finitely many places. We claim that 
$F_v[m\bar L] \geq 0$. 
Otherwise, there is a section $s\in H^0(X_{K_v},mL_{K_v})(m\alpha)$ with $\|s\|_{v,\sup}<1$ for some $\alpha\in \Lambda_m(L)$. 
Then $\divv(s)$, as a divisor on $\cx$, has positive multiplicity on $\cx_{\FF_v}$. 
This is impossible since 
$$s = s_0^{m}t^{m\alpha}(1+ \mathrm{\ higher\ order\ terms} ).$$

Now we consider (3a).
By (2a), it suffices to show that $F_v[m\bar L](m\alpha)\leq 0$ for almost all places $v$ and for all $m\in M(\alpha)$. Here $M(\alpha)$ denote the set of positive integer $m$ such that $\alpha\in \Lambda_m(L)$. It is a semigroup.
Fix an $m\in M(\alpha)$, and pick any element $s\in H^0(X_{K_v},mL_{K_v})(m\alpha)$.
Then $\|s\|_{v,\sup}=1$ for almost all non-archimedean place $v$.
For such a place $v$, by definition $F_v[m\bar L](m\alpha)\leq 0$. 
In another word, there is a finite set $S(m)$ of places of $K$ such that $F_v[m\bar L](m\alpha)\leq 0$ for all 
$v\notin S(m)$.
Observe that for $m_1, m_2\in M(\alpha)$, we have $m_1+m_2\in M(\alpha)$ and $S(m_1+m_2)\subset S(m_1)\cup S(m_2)$. 
Since the semigroup $M(\alpha)$ is finitely generated, it is easy to find a common $S$ for all $m\in M(\alpha)$.

In the end, we consider (3b).
By (3a), we see that the result is true if $\alpha\in \Lambda(L)$.
For a general $\alpha \in \Delta(\bar L)^\circ$, pick two elements $\beta, \gamma \in \Lambda(\bar L)$ such that the line segment between $\beta$ and $\gamma$ contains $\alpha$. We have showed $c_v[\bar L](\beta)= c_v[\bar L](\gamma)= 0$ for almost all $v$.
Since $c_v[\bar L]$ is convex, we have $c_v[\bar L](\alpha)\leq 0$ almost all $v$. By (1b), we conclude that 
$c_v[\bar L](\alpha)= 0$ for almost all $v$.
\end{proof}

\begin{remark}
By the convexity of $c_v[\bar L]$, we actually know that for any closed convex polytope contained in $\Delta(\bar L)^\circ$, $c_v[\bar L]$ vanishes identically on the polytope for almost all $v$.
\end{remark}

Now we are can introduce the global Chebyshev transform as the sum of the local ones.
Denote 
\begin{eqnarray*}
F[\bar L](\alpha):&= &\sum_{v} F_v[\bar L](\alpha), \quad \alpha\in \Lambda_1(L)\\
c[\bar L](\alpha):&= &\sum_{v} c_v[\bar L](\alpha), \quad \alpha\in \bodyin.
\end{eqnarray*}
By Lemma \ref{trivial bounds} (3a) (3b), both sums are finite and thus well-defined. 
Below is the global version of Proposition \ref{definition}.

\begin{pro}\label{global limit}
The following are true:
\begin{itemize}
\item[(1)] The function $c[\bar L]$ is convex and continuous on $\bodyin$. 
\item[(2)] Let $\{m_k\}_k$ be a sequence of positive integers convergent to infinity, 
and $\alpha_k\in \Lambda_{m_k}(L)$ be a sequence convergent to some $\alpha\in \Delta(L)^\circ$. 
Then
$$
c[\bar L](\alpha)= \lim_{k\rightarrow\infty} \frac{1}{m_k} F[m_k\bar L](m_k\alpha_k).
$$
\end{itemize}
\end{pro}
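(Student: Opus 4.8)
The plan is to bootstrap from Proposition \ref{definition} (the per-place existence of the limit defining $c_v[\bar L]$) and from the finiteness statements of Lemma \ref{trivial bounds}, exactly as in the local argument but keeping track of the sum over $v$. First I would fix a point $\alpha\in\bodyin$ and choose, as in the proof of Lemma \ref{trivial bounds}(3b), two points $\beta,\gamma\in\Lambda(L)$ whose line segment contains $\alpha$ in its interior; even better, choose a small closed convex polytope (say a simplex) $P\subset\bodyin$ with $\alpha$ in its interior and with all vertices in $\Lambda(L)$. By the Remark following Lemma \ref{trivial bounds}, there is a finite set $S$ of places such that $c_v[\bar L]$ vanishes identically on $P$ for every $v\notin S$. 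This reduces everything on $P$ to a \emph{finite} sum $c[\bar L]=\sum_{v\in S}c_v[\bar L]$.

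Granting this reduction, part (1) is immediate: each $c_v[\bar L]$ is convex and continuous on $\bodyin$ by Proposition \ref{definition}, hence so is a finite sum of them on the interior of $P$; since $\alpha$ was arbitrary and convexity/continuity are local properties, $c[\bar L]$ is convex and continuous on all of $\bodyin$. (One should remark that convexity on a neighborhood of each interior point of a convex set forces convexity on the whole set — this is standard, but worth a line.) For part (2), I would likewise restrict attention to $k$ large enough that $\alpha_k$ lies in the interior of $P$ (possible since $\alpha_k\to\alpha\in P^\circ$). The key is to control the ``extra'' places: I want to say that for $v\notin S$ and $k$ large, $\frac1{m_k}F_v[m_k\bar L](m_k\alpha_k)$ is negligible. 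The obstruction is that a point $\alpha_k\in\Lambda_{m_k}(L)$ need not lie in $\Lambda_m(L)$ for small $m$, so Lemma \ref{trivial bounds}(3a) does not directly apply uniformly along the sequence; one needs a version of the vanishing that is uniform over lattice points in $P$. This is supplied by combining Lemma \ref{trivial bounds}(2a) (so the terms for $v\notin S_0$, some finite set, are $\ge 0$) with Corollary \ref{convex geometry} applied to the semigroup $\Gamma$: for $m$ large, $P\cap\Lambda_m(L)=P\cap\frac1m\NN^d$, and one can then produce, for each such lattice point, sections with $v$-adic sup-norm $1$ at almost all $v$ using the algebraic model as in the proof of (2a)/(3a). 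Thus outside a finite set $S$ (enlarging $S_0$ if necessary) the terms vanish for all $k$ large, and we are again reduced to a finite sum.

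On this finite set $S$, part (2) follows by applying Proposition \ref{definition} place by place: for each $v\in S$, $\frac1{m_k}F_v[m_k\bar L](m_k\alpha_k)\to c_v[\bar L](\alpha)$, and summing the finitely many limits gives $\frac1{m_k}F[m_k\bar L](m_k\alpha_k)\to\sum_{v\in S}c_v[\bar L](\alpha)=c[\bar L](\alpha)$. The main obstacle, as indicated, is the uniformity of the vanishing $F_v[m_k\bar L](m_k\alpha_k)=0$ for $v\notin S$ along a sequence of lattice points that drifts with $m_k$; I expect this to require the Khovanskii-type stabilization (Corollary \ref{convex geometry}) together with the integral-model argument already used for Lemma \ref{trivial bounds}(2a), and possibly a compactness argument to make the finite set $S$ work simultaneously for the whole neighborhood $P$ rather than for a single lattice point. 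Everything else is a routine transcription of the local statements in Proposition \ref{definition} and the bounds in Lemma \ref{trivial bounds}.
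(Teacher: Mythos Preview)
Your plan follows the same strategy as the paper: reduce to a finite set $S$ of places and then sum the local limits from Proposition \ref{definition}. The only point where your sketch is vaguer than the paper is the uniform vanishing $F_v[m_k\bar L](m_k\alpha_k)=0$ for $v\notin S$ along the drifting sequence $\{\alpha_k\}$. The paper does not use Corollary \ref{convex geometry} plus a compactness argument here; instead it invokes Theorem \ref{Khovanskii} directly. One chooses finitely many elements $(n_j\beta_j,n_j)\in\Gamma$ generating a sub-semigroup $\Gamma'$ whose closed cone contains all the $(m_k\alpha_k,m_k)$; Khovanskii's theorem then forces $(m_k\alpha_k,m_k)\in\Gamma'$ for $k$ large, so each is a \emph{nonnegative integer combination} $\sum_j a_j(n_j\beta_j,n_j)$ of the fixed generators. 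Applying Lemma \ref{trivial bounds}(3a) to the finitely many $\beta_j$ yields a single finite $S$ with $F_v[n_j\bar L](n_j\beta_j)=0$ for all $v\notin S$; tensor products $\otimes_j s_j^{\otimes a_j}$ of the witnessing sections then give $F_v[m_k\bar L](m_k\alpha_k)\le 0$, and combined with Lemma \ref{trivial bounds}(2a) one gets equality. This semigroup decomposition is precisely the uniformity you flagged as uncertain; Corollary \ref{convex geometry} alone only asserts equality of lattice-point sets, not a decomposition into fixed generators, so it does not by itself produce a single $S$ that works for all $k$, and no separate compactness argument is needed once you use Theorem \ref{Khovanskii}.
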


\begin{proof}
These properties can be transfered from the local case since the definitions of $F[\bar L]$ and $c[\bar L]$ are essentially summations. For example, (1) is immediate. 

Now we prove (2). We claim that there is
a finite set $S$ of places such that for any $v\notin S$,
$$F_v[m_k\bar L](m_k\alpha_k)= 0.$$
Once this is true, the result follows from the local case.

By Lemma \ref{trivial bounds} (2a), there is a 
a finite set $S$ of places of $F$ such that for any $v\notin S$,
$$F_v[m_k\bar L](m_k\alpha_k)\geq 0.$$
It remains to show that the inverse direction of the inequality is true for some $S$. 

As in \S \ref{preliminary}, denote
$$\Gamma= \bigcup_{m\geq 1} \nu(H^0(X,mL))\times \{m\}.$$
By definition $(m_k\alpha_k,m_k)$ lies in the closed convex cone $\Sigma(\Gamma)$.
Since $\alpha_k\rightarrow \alpha$, we can find a sub-semigroup $\Gamma'$
generated by finitely many points
$(n_1\beta_1,n_1), \cdots, (n_r\beta_r,n_r)$ of $\Gamma$ such that
$\Sigma(\Gamma')$ contains $(m_k\alpha_k,m_k)$ for all $k$.
By Theorem \ref{Khovanskii}, $(m_k\alpha_k,m_k)\in \Gamma'$ for $k$ large enough.

By Lemma \ref{trivial bounds} (3a), there is a 
a finite set $S$ of places of $F$ such that for any $v\notin S$,
$$F_v[n_j\bar L](n_j\beta_j)=0, \quad j=1,\cdots, r.$$
It implies that for any $v\notin S$,
$$F_v[m_k\bar L](m_k\alpha_k)\leq 0, \quad\forall k\gg0.$$
In fact, for sufficiently large $k$ one can write
$$(m_k\alpha_k,m_k)=\sum_j a_j (n_j\beta_j,n_j), \quad a_j\in \NN.$$
Then any choice of section $s_j\in H^0(X_{K_v},n_j L_{K_v})(n_j\beta_j)$
gives a section 
$$\otimes_j s_j^{\otimes a_j} \in H^0(X_{K_v},m_k L_{K_v})(m_k\alpha_k).$$
Then it is easy to have the bound.

\end{proof}

\subsection{Euler characteristic and arithmetic degree}

Let $X, \bar L$ be as before.
In this subsection we build relation between $F[\bar L]$ and $\chi(\bar L)$. 
The main result of this subsection is

\begin{thm}\label{summation}
\begin{eqnarray*}
\chi(m \bar L) = - \sum_{\alpha\in \Lambda_m(L)} F[m\bar L](m\alpha)+O(m^d\log m).
\end{eqnarray*}
\end{thm}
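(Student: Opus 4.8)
The plan is to express $\chi(m\bar L)$ as a sum of contributions indexed by the valuation vectors $\alpha \in \Lambda_m(L)$, using the filtration of $H^0(X,mL)$ by valuation. Fix $m$ and abbreviate $V = H^0(X,mL)$, $n = \dim_K V$. Order the finitely many elements of $\nu(V) = m\Lambda_m(L)$ as $\beta_1 < \beta_2 < \cdots < \beta_n$ in the lexicographic order (each occurs with multiplicity one since $x_0$ is a smooth rational point, so $\#\nu(V) = n$). Let $V_i = \{s \in V : \nu(s) \geq \beta_i\} \cup \{0\}$, giving a full flag $V = V_1 \supset V_2 \supset \cdots \supset V_n \supset V_{n+1} = 0$ with one-dimensional graded pieces. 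For each $i$ choose the normalized section $s_{\beta_i} \in H^0(X,mL)(\beta_i)$ (leading coefficient $1$); these form a $K$-basis of $V$ adapted to the flag. The key point is that this basis is essentially orthogonal for every supremum norm $\|\cdot\|_{v,\sup}$ up to a controlled error, so that $\chi(m\bar L)$ — which by definition compares $\vol(B(m\bar L))$ with the covolume of the lattice $V$ in $V_{\AA_K}$ — decomposes along the flag into the sum of the "successive minima" contributions $-\log(\inf_s \|s\|_{v,\sup})$ over sections $s$ normalized at $\beta_i$, i.e.\ exactly $F[m\bar L](\beta_i) = \sum_v F_v[m\bar L](\beta_i)$, with total error $O(m^d \log m)$.

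Concretely, I would proceed as follows. First, recall the adelic Minkowski/Riemann–Roch comparison: for an adelically normed space $(V, \{\|\cdot\|_v\})$ of dimension $n$ over $K$, if $e_1,\dots,e_n$ is a $K$-basis, then $\chi$ of the associated unit ball differs from $-\sum_{i}\sum_v \log\|e_i\|_v$ by an amount bounded by a constant depending only on $n = \dim V$ and $[K:\QQ]$ (this is the content of the comparison between the volume of a box/parallelepiped and the volume of a ball, plus the finiteness of the class number and unit considerations — essentially the arithmetic analogue of Lemma 2.2-type estimates and is used identically in \cite{Ny} and \cite{Ch}). Here $n = \dim_K H^0(X,mL) = O(m^d)$, which produces the $O(m^d \log m)$: each of the $n$ basis vectors contributes an error of size $O(\log m)$ coming from the discrepancy between the sup-norm and an actual orthogonal (or orthogonalizable at non-archimedean places, nearly-orthogonal at archimedean ones) structure on the graded pieces, together with the fact that only $O(\log m)$ or fewer places can be genuinely "bad." Second, I would argue that for the \emph{adapted} basis $\{s_{\beta_i}\}$ one may replace $\sum_v \log\|s_{\beta_i}\|_{v,\sup}$ by $\sum_v F_v[m\bar L](\beta_i) = F[m\bar L](\beta_i)$: indeed $\|s_{\beta_i}\|_{v,\sup} \geq e^{F_v[m\bar L](\beta_i)}$ by definition, and conversely one can choose representatives within the coset $s_{\beta_i} + V_{i+1}$ (i.e.\ adjust $s_{\beta_i}$ by higher-valuation sections, which does not change its class in the graded piece nor the flag) realizing the infimum up to $\epsilon$; since modifying each $s_{\beta_i}$ by an element of $V_{i+1}$ is a unipotent (triangular, unit-determinant) change of basis, it leaves $\chi$ of the unit ball unchanged. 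Summing over $i$ and using $F[m\bar L](m\alpha)$ with $\alpha = \beta_i/m$ ranging over $\Lambda_m(L)$ gives the claimed identity.

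The main obstacle — and the step I would spend the most care on — is the uniformity of the error term $O(m^d\log m)$, in particular showing it is truly uniform in $m$. There are two sources of error to control simultaneously: (i) at each place, the gap between the sup-norm of a general element of $V$ and the "triangular" estimate in terms of the $\|s_{\beta_i}\|_{v,\sup}$ (the failure of exact orthogonality), and (ii) the contribution of the bad places — those where the metric is non-algebraic, or where $\divv(s_0)$ or $\divv(t_i)$ meets the fibre, or where the lattice of integral sections is not free. For (i), at non-archimedean places one gets genuine orthogonality of the valuation filtration for the model metric (so zero error there, for all but finitely many $v$, by an argument parallel to the proof of Lemma \ref{trivial bounds}(2a)), while at the finitely many remaining places and at archimedean places one invokes the standard estimate that an $n$-dimensional normed space has an orthogonal-up-to-$n^{O(1)}$ basis refining any given flag, contributing $n\log n \cdot O(1) = O(m^d \log m)$ in total. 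For (ii), enlarging $K$ and choosing a good model confines the genuinely problematic data to a fixed finite set of places independent of $m$, each contributing $O(n) = O(m^d)$ (no $\log m$ even) to $\chi$. Assembling these bounds into a single clean $O(m^d\log m)$, with the implied constant independent of $m$, is the technical heart; the rest is bookkeeping with the flag and the product formula (which also re-confirms independence of the choice of $s_0$, consistent with the remark in \S1.2).
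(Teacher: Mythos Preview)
Your overall architecture matches the paper's: filter $H^0(X,mL)$ by the valuation flag, pick an adapted basis, and compute $\chi$ place by place as a determinant, using that the transition between a global choice $\{s_\alpha\}$ and a place-optimal choice $\{e_{v,\alpha}\}$ is unipotent along the flag. The difference is in how the archimedean orthogonality is obtained. The paper does \emph{not} stay with the sup norm and invoke a John-type ``orthogonal up to $n^{O(1)}$'' basis. Instead it fixes a smooth probability measure, passes to the $L^2$-norm via Gromov's inequality (Lemma~\ref{Gromov}), and observes that for the $L^2$-norm the minimizers $e_{v,\alpha}$ over the cosets $H^0(X_{K_v},L_{K_v})(\alpha)$ are \emph{exactly} orthogonal (if $e_{v,\alpha}$ were not orthogonal to some $e_{v,\alpha'}$ with $\alpha'>\alpha$, a small perturbation would decrease the norm). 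This gives the exact identity $\det(\langle s_\alpha,s_\beta\rangle_v)^{1/2}=\prod_\alpha \|e_{v,\alpha}\|_{v,L^2}$, and the $O(m^d\log m)$ error then comes from two transparent sources: Gromov's inequality ($O(\log m)$ per section) and Stirling's formula in the comparison $\chi_{L^2}$ versus the arithmetic degree. At non-archimedean places the paper proves the exact equality $B_v(\bar L)=\sum_\alpha O_{K_v}e_{v,\alpha}^{\circ}$ for \emph{every} $v$ (not just almost all), by an inductive argument using the minimality of $e_{v,\alpha_1}$; so there is no error at all there. Your route via generic normed-space estimates should also succeed and lands on the same $O(m^d\log m)$, but the $L^2$ device buys a cleaner and more explicit proof with exact identities at every place and the error isolated in two lemmas. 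One phrasing issue: your ``adelic Minkowski/Riemann--Roch'' as first stated (for an arbitrary basis $e_1,\dots,e_n$) is false without an orthogonality hypothesis; you do supply that hypothesis later, but it would be clearer to merge the two steps and state directly that the place-optimal, flag-adapted bases $\{e_{v,\alpha}\}$ are (nearly) orthogonal and related to one another by unipotent transitions.
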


\subsubsection*{Change of norms}
The supremum norm at archimedean place is not an inner product, so we introduce the $L^2$-norm and compare these two norms.

Let $v$ be an archimedean place. 
Fix a measure $d\mu_v$ on $X(\CC_v)$, which is assumed to be the push-forward measure of a positive smooth volume form on some resolution of singularity of $X(\CC_v)$. It gives an $L^2$-norm by 
$$\|s\|_{v,L^2}^2= \int \|s(z)\|_v^2 d\mu_v , \quad s \in H^0(X, L)_{\CC_v}.$$
Denote the associated bilinear pairing by $\langle \cdot, \cdot \rangle_v$.

\begin{lem}[Gromov]\label{Gromov}
There exists a positive constant $a$ such that
$$am^{-d} \|s\|_{v,\sup} \leq \|s\|_{v,L^2} \leq \|s\|_{v,\sup} \ ,  \ \forall m>0, s\in H^0(X, m L)_{\CC_v}. $$
\end{lem}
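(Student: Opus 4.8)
The plan is to reduce the statement, via a resolution of singularities, to a purely local sub-mean-value estimate on polydiscs, the one genuinely delicate point being the control of the exponential weight $e^{-m\phi}$ on balls of the right radius.

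First I would choose a resolution of singularities $\pi\colon\widetilde X\to X$ over $\CC_v$ for which $d\mu_v=\pi_*\omega$ with $\omega$ a smooth positive volume form on the compact complex manifold $\widetilde X(\CC_v)$. Pulling back along $\pi$ embeds $H^0(X,mL)_{\CC_v}$ into $H^0(\widetilde X,\pi^*(mL))_{\CC_v}$, preserves the supremum norm (since $\pi$ is surjective), and identifies $\|\cdot\|_{v,L^2}$ with the $L^2$-norm taken against $\omega$ on $\widetilde X(\CC_v)$. Thus we may assume $X(\CC_v)$ is a compact complex manifold of dimension $d$, the metric on $L$ is smooth, $d\mu_v$ is a smooth positive volume form, and (after scaling $d\mu_v$, which only changes the constant $a$) $d\mu_v$ has total mass $1$. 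Then $\|s(z)\|_v\le\|s\|_{v,\sup}$ for all $z$ gives the right-hand inequality immediately, and it remains to establish the left-hand one.

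For the lower bound I would fix a finite atlas: coordinate polydiscs $U_1,\dots,U_N$ covering $X(\CC_v)$, with concentric shrinkings $U_j'\Subset U_j$ still covering $X(\CC_v)$, such that $L$ is trivialized over $U_j$ by a frame $e_j$ with $\|e_j\|_v=e^{-\phi_j}$ and $\phi_j$ smooth. Let $c_0>0$ be a common lower bound for the density of $d\mu_v$ with respect to Lebesgue measure $dV$ in the chart coordinates, put $\Lambda=\max_j\sup_{U_j}|\nabla\phi_j|$, and fix $\delta>0$ small enough that $\overline{B(x_0,\delta/m)}\subset U_j$ whenever $x_0\in U_j'$ and $m\ge1$. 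Given $s\in H^0(X,mL)_{\CC_v}$ nonzero, choose $x_0$ with $\|s(x_0)\|_v=\|s\|_{v,\sup}$ and $j$ with $x_0\in U_j'$, and write $s=f\,e_j^{\otimes m}$ on $U_j$ with $f$ holomorphic, so that $\|s(z)\|_v=|f(z)|\,e^{-m\phi_j(z)}$. On $B(x_0,\delta/m)$ one has $|\phi_j(z)-\phi_j(x_0)|\le\Lambda\delta/m$, hence $e^{-m\phi_j(z)}\ge e^{-\Lambda\delta}e^{-m\phi_j(x_0)}$; and since $|f|^2$ is plurisubharmonic, hence subharmonic on $\RR^{2d}$, the volume sub-mean-value inequality gives $|f(x_0)|^2\le \mathrm{vol}(B(x_0,\delta/m))^{-1}\int_{B(x_0,\delta/m)}|f(z)|^2\,dV$. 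Combining these,
$$
\|s\|_{v,L^2}^2 \ \ge\ \int_{B(x_0,\delta/m)}\|s(z)\|_v^2\,d\mu_v \ \ge\ c_0\,e^{-2\Lambda\delta}\,e^{-2m\phi_j(x_0)}\int_{B(x_0,\delta/m)}|f(z)|^2\,dV \ \ge\ c_0\,e^{-2\Lambda\delta}\,\kappa_{2d}\,\delta^{2d}\,m^{-2d}\,\|s(x_0)\|_v^2,
$$
where $\kappa_{2d}$ denotes the volume of the Euclidean unit ball in $\RR^{2d}$; with $a=(c_0\,e^{-2\Lambda\delta}\,\kappa_{2d}\,\delta^{2d})^{1/2}$ this is exactly $\|s\|_{v,L^2}\ge a\,m^{-d}\|s\|_{v,\sup}$.

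The only subtle step — and the one I expect to be the main obstacle — is handling the weight $e^{-m\phi_j}$. Replacing it by $m$-independent upper and lower bounds would cost a multiplicative factor exponential in $m$, which is useless; one is therefore forced to localize to balls of radius of order $1/m$, on which $m\phi_j$ varies by only $O(1)$. This localization is exactly what produces the polynomial loss $m^{-d}$: the ball has volume of order $m^{-2d}$, and the square root of that appears because the comparison is between the sup-norm and an $L^2$-norm. (The passage to a resolution is needed only so that Euclidean balls and the sub-mean-value inequality are available near the singular locus of $X$.)
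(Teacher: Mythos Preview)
The paper does not prove this lemma itself but simply refers the reader to \cite{GS} and \cite[Proposition~2.13]{Yu1}; your argument is precisely the classical proof found in those references and is correct. One minor point worth tightening: passing to a resolution smooths the variety but not the metric, so the clause ``the metric on $L$ is smooth'' requires one extra observation --- namely that on a compact complex manifold any continuous hermitian metric is bounded above and below by positive multiples of a smooth one, so the inequality for smooth metrics transfers to continuous ones with only a change in the constant~$a$. With that caveat, the localization to balls of radius $\delta/m$ together with the sub-mean-value inequality for $|f|^2$ is exactly the standard argument.
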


For a proof, see \cite{GS} or \cite[Proposition 2.13]{Yu1}. 
Such a property is called Bernstein-Markov property by analysists.

Replacing the supremum norms by the $L^2$-norms at every archimedean place, we define the $L^2$-version $(\chil(\bar L), F'_v[\bar L],c'_v[\bar L])$ of $(\chi(\bar L), F_v[\bar L],c_v[\bar L])$.
Their difference can be ignored by the above lemma.

First, the $L^2$-characteristic $\chil$ is defined by
\begin{eqnarray*}
\chil(\bar L) = \log \frac{\vol(B_{L^2}(\bar L))}{\vol(H^0(X,L)_{\AA_K}/H^0(X,L))}.
\end{eqnarray*}
Here the unit ball
$$
B_{L^2}(\bar L)=\prod_{v|\infty } B_{v,L^2}(\bar L)\times \prod_{v\nmid\infty } B_v(\bar L)
$$
with
$$B_{v,L^2}(\bar L)=\{s\in H^0(X,L)_{K_v}: \|s\|_{v,L^2}\leq 1 \}.$$

Second, for any archimedean $v$, we define
$$
F'_v[m\bar L](m\alpha):= \inf_{s\in H^0(X_{K_v},mL_{K_v})(m\alpha)} \{  \log \|s\|_{v,L^2} \}.
$$
For any $\alpha\in \Delta(L)^\circ$, choose a sequence $\alpha_k\in \Delta_{m_k}(L)$ converging to a point $\alpha$,
and define 
$$
c'_v[\bar L](\alpha):= \lim_{k\rightarrow\infty} \frac{1}{m_k} F_v'[m_k\bar L](m_k\alpha_k).
$$
We have the following simple consequence of Lemma \ref{Gromov}.

\begin{lem}\label{norm comparison}
Let $v$ be an archimedean place. The following are true:
\begin{itemize}
\item[(1)]  $\chil(m\bar L)= \chi(m\bar L)+O(m^d\log m).$
\item[(2)]  $F'_v[m\bar L](m\alpha)= F_v[m\bar L](m\alpha)+O(\log m).$
\item[(3)]  For any $\alpha\in\Delta(L)^\circ$, the limit defining $c'_v[\bar L](\alpha)$ exists and does not depend on the sequence $\{\alpha_k\}$. Furthermore, $c'_v[\bar L]=c_v[\bar L]$. 
\end{itemize}
\end{lem}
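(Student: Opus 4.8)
The plan is to obtain all three assertions from Gromov's comparison (Lemma~\ref{Gromov}), which at each archimedean place $v$ sandwiches the two norms on $H^0(X,mL)_{\CC_v}$ as $am^{-d}\|s\|_{v,\sup}\le\|s\|_{v,L^2}\le\|s\|_{v,\sup}$. For part (2), fix $(m,\alpha)$ with $\alpha\in\Lambda_m(L)$, so that $H^0(X_{K_v},mL_{K_v})(m\alpha)$ is nonempty. Applying the right-hand inequality of Lemma~\ref{Gromov} to each section in this set and taking the infimum gives $F'_v[m\bar L](m\alpha)\le F_v[m\bar L](m\alpha)$; applying the left-hand inequality likewise gives $F_v[m\bar L](m\alpha)\le F'_v[m\bar L](m\alpha)+d\log m-\log a$. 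Since $d$ and $a$ are fixed, $d\log m-\log a=O(\log m)$, which proves (2). Note that no estimate on $\dim H^0(X,mL)$ is needed here.

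For part (1), I would rewrite the norm sandwich as an inclusion of unit balls inside $H^0(X,mL)_{K_v}$,
\[
B_v(m\bar L)\ \subset\ B_{v,L^2}(m\bar L)\ \subset\ a^{-1}m^{d}\, B_v(m\bar L),
\]
valid at every archimedean place $v$. In the difference $\chil(m\bar L)-\chi(m\bar L)$ the lattice covolume $\vol(H^0(X,mL)_{\AA_K}/H^0(X,mL))$ in the denominator is metric-independent and cancels, as do the non-archimedean factors of the two unit balls; hence $\chil(m\bar L)-\chi(m\bar L)=\sum_{v\mid\infty}\bigl(\log\vol(B_{v,L^2}(m\bar L))-\log\vol(B_v(m\bar L))\bigr)$. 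Each $B_v(m\bar L)$ is a symmetric convex body of finite positive Lebesgue volume, and dilation by $\lambda>0$ scales volume in the real vector space $H^0(X,mL)_{K_v}$ by $\lambda^{N_v(m)}$ with $N_v(m)=\dim_\RR H^0(X,mL)_{K_v}\le 2\dim_K H^0(X,mL)$. The displayed inclusions therefore give
\[
0\ \le\ \log\vol(B_{v,L^2}(m\bar L))-\log\vol(B_v(m\bar L))\ \le\ N_v(m)\,(d\log m-\log a).
\]
Since there are finitely many archimedean places and $\dim_K H^0(X,mL)=O(m^d)$ (as $X$ has dimension $d$), summing over $v\mid\infty$ yields $\chil(m\bar L)-\chi(m\bar L)=O(m^d\log m)$.

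Finally, part (3) follows by combining (2) with Proposition~\ref{definition}. For $\alpha_k\in\Lambda_{m_k}(L)$ converging to $\alpha\in\Delta(L)^\circ$, part (2) gives $\frac1{m_k}F'_v[m_k\bar L](m_k\alpha_k)=\frac1{m_k}F_v[m_k\bar L](m_k\alpha_k)+O(\log m_k/m_k)$, and the error term tends to $0$. By Proposition~\ref{definition} the first term on the right converges to $c_v[\bar L](\alpha)$ independently of the choice of $\{(m_k,\alpha_k)\}$; hence so does the left-hand side, which yields both the existence and sequence-independence of the limit defining $c'_v[\bar L](\alpha)$ and the identity $c'_v[\bar L]=c_v[\bar L]$ on $\Delta(L)^\circ$. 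There is no serious obstacle here: the lemma is essentially a bookkeeping consequence of Lemma~\ref{Gromov}. The only step requiring a little care is the volume estimate in (1), where one must notice the cancellation of the metric-independent covolume and control the dimension growth $\dim_K H^0(X,mL)=O(m^d)$ so that $N_v(m)\,d\log m=O(m^d\log m)$.
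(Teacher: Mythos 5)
Your proof is correct, and since the paper states the lemma as a ``simple consequence of Lemma~\ref{Gromov}'' without writing out the argument, what you have supplied is precisely the intended bookkeeping: the two-sided Gromov bound gives (2) immediately by passing to infima, then (1) by the unit-ball inclusion $B_v(m\bar L)\subset B_{v,L^2}(m\bar L)\subset a^{-1}m^d B_v(m\bar L)$ and the dimension bound $\dim_K H^0(X,mL)=O(m^d)$, and (3) follows from (2) together with Proposition~\ref{definition} because the discrepancy $O(\log m_k/m_k)$ vanishes in the limit. No gaps.
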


\subsubsection*{Euler characteristic and arithmetic degree}
Let $s_1, s_2, \cdots, s_N$ be a basis of $H^0(X,L)$ over $K$, where $N=\dim_K H^0(X,L)$.
Define
\begin{eqnarray*}
\deg H^0(X,\bar L) := - \sum_{v\mid\infty} \log \sqrt{\det(\langle s_i,s_j\rangle_v)_{i,j}}
-\sum_{v\nmid \infty }\log \frac{\vol(O_{K_v} s_1+ \cdots+O_{K_v} s_N)}{\vol(B_v(\bar L))}.
\end{eqnarray*}

\begin{pro}\label{degree}
The definition of $\deg H^0(X,\bar L)$ is independent of the basis $s_1, s_2, \cdots, s_N$.
Furthermore, there exists a positive constant $a_K$ depending only on $K$ such that 
\begin{eqnarray*}
|\chil(X,\bar L) - \deg H^0(X,\bar L)|\leq a_KN\log N.
\end{eqnarray*}
\end{pro}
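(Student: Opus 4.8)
The plan is to prove the two assertions in turn: first the basis-independence of $\deg H^0(X,\bar L)$, then the comparison with $\chil(X,\bar L)$ up to $a_K N\log N$.

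For basis-independence, suppose $s_1',\dots,s_N'$ is another $K$-basis, related to $s_1,\dots,s_N$ by a matrix $g\in GL_N(K)$. For each place $v$ I would track how each term changes. At an archimedean $v$, the Gram matrix transforms as $(\langle s_i',s_j'\rangle_v) = g_v (\langle s_i,s_j\rangle_v) {}^t\bar g_v$, so $\log\sqrt{\det(\cdot)}$ changes by $\log|\det g|_v$ (using the normalization $|\cdot|_v$ adapted to $\CC$ at archimedean places, so that a real scaling contributes its $v$-adic absolute value). At a non-archimedean $v$, the lattice $O_{K_v}s_1'+\cdots+O_{K_v}s_N'$ has covolume differing from that of $O_{K_v}s_1+\cdots+O_{K_v}s_N$ by exactly $|\det g|_v$, while $\vol(B_v(\bar L))$ is unchanged; hence that term also changes by $\log|\det g|_v$. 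Summing the change over all $v$ gives $\sum_v \log|\det g|_v = 0$ by the product formula, since $\det g\in K^\times$. Hence $\deg H^0(X,\bar L)$ is well-defined.

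For the comparison with $\chil$, the point is that $\chil(X,\bar L)$ is the log-ratio of the volume of the adelic unit ball $B_{L^2}(\bar L)=\prod_{v\mid\infty}B_{v,L^2}(\bar L)\times\prod_{v\nmid\infty}B_v(\bar L)$ to the covolume of the diagonal lattice $H^0(X,L)$ in $H^0(X,L)_{\AA_K}$, with respect to any Haar measure. Choosing the basis $s_1,\dots,s_N$ to identify $H^0(X,L)_{\AA_K}\cong \AA_K^N$, the lattice covolume is a product over $v$ of local covolumes, and at non-archimedean $v$ the ratio $\vol(O_{K_v}s_1+\cdots+O_{K_v}s_N)/\vol(B_v(\bar L))$ appears verbatim — so the non-archimedean contributions to $\chil$ and to $\deg H^0(X,\bar L)$ agree exactly. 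The only discrepancy is at the archimedean places, where $\deg H^0$ uses $-\log\sqrt{\det(\langle s_i,s_j\rangle_v)}$, i.e. the log-covolume of the lattice generated by $s_1,\dots,s_N$ in $(H^0(X,L)_{K_v},\|\cdot\|_{v,L^2})$, whereas $\chil$ uses $\log\vol(B_{v,L^2}(\bar L))$, the log-volume of the unit ball of that same norm. For a positive-definite Hermitian (or Euclidean) norm on a space of real dimension $[K_v:\RR]\cdot N$, these two quantities differ by the log-volume of the standard unit ball in $\RR^{[K_v:\RR]N}$, which is $\frac{[K_v:\RR]N}{2}\log\pi-\log\Gamma(\frac{[K_v:\RR]N}{2}+1)$ (or the analogous complex-ball constant), and by Stirling this is $O(N\log N)$ with an implied constant depending only on $[K:\QQ]$ and the number of archimedean places — hence only on $K$. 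Summing over the finitely many archimedean $v$ yields $|\chil(X,\bar L)-\deg H^0(X,\bar L)|\le a_K N\log N$.

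The only mild obstacle is bookkeeping the archimedean normalizations: one must be consistent about whether $|\cdot|_v$ at a complex place is the square of the usual modulus (so that the product formula and the Gram-determinant transformation law both come out right), and one must correctly identify the real dimension of $H^0(X,L)_{K_v}$ as $[K_v:\RR]N$ when computing the volume of the unit ball of an $L^2$-norm. Once these conventions are fixed, the argument is a direct comparison term by term: the non-archimedean terms match identically, the product formula kills the basis-change ambiguity, and Stirling's formula controls the archimedean ball-volume constants by $O(N\log N)$. I do not expect any genuine difficulty beyond this.
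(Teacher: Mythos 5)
Your approach is essentially the paper's: express both $\chil$ and $\deg H^0$ via the chosen basis, observe the non-archimedean contributions agree verbatim, control the archimedean discrepancy by ball-volume constants and Stirling, and use the product formula for well-definedness.

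There is one imprecision worth flagging. You assert that after identifying $H^0(X,L)_{\AA_K}\cong\AA_K^N$, the covolume of the lattice $H^0(X,L)$ is "a product over $v$ of local covolumes," so that the only discrepancy with $\deg H^0$ is the archimedean unit-ball constant. That is not quite right: the adelic quotient does not factor as a product over all places, because $K$ is diagonally embedded. What one has (and what the paper uses) is
$$H^0(X,L)_{\AA_K}/H^0(X,L)\;\cong\;\bigl(H^0(X,L)\otimes_{\QQ}\RR\,/\,M\bigr)\times\prod_{v\nmid\infty} M_{O_{K_v}},\qquad M=O_K s_1+\cdots+O_K s_N,$$
and the archimedean factor is the covolume of the rank-$N$ free $O_K$-lattice $M$ inside $(\RR^{r_1}\times\CC^{r_2})^N$, namely $(\sqrt{d_K})^N$, where $d_K$ is the discriminant of $K$. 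This contributes an additional term $-\tfrac12 N\log d_K$ to $\chil - \deg H^0$, on top of the ball-volume constants $r_1\log V(N)+r_2\log V(2N)$; the paper records the exact identity
$$\chil(X,\bar L)=\deg H^0(X,\bar L)-\tfrac12 N\log d_K+r_1\log V(N)+r_2\log V(2N).$$
Your accounting of "the only discrepancy" omits the $\tfrac12 N\log d_K$ piece. Since it is $O(N)$ with a constant depending only on $K$, it is absorbed into $a_K N\log N$ and the final bound still holds, but the bookkeeping should include it.
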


\begin{proof}
We first recall some basic algebraic number theory. One has a coset identity 
$$\AA_K/K \cong \left(K\otimes_{\QQ}\RR /O_K\right) \times \prod_{v\nmid \infty} O_{K_v}.$$
Here $K\otimes_{\QQ}\RR = \RR^{r_1}\times \CC^{r_2}$ where $r_1$ (resp. $2r_2$) is the number of real (resp. imaginary) embeddings of $K$ in $\CC$. Endow $K\otimes_{\QQ}\RR$ with the Lebesgue measure induced by the identity, then
$ \vol(K\otimes_{\QQ}\RR /O_K)=\sqrt{d_K}$. Here $d_K$ denotes the discriminant of $K$ over $\QQ$.

Go back to the current situation. By the product formula, it is easy to see that the definition is independent of the basis. Denote by $M=O_K s_1+ \cdots+O_K s_N$
the free lattice in $H^0(X,L)$ generated by the basis.
We obtain 
$$H^0(X,L)_{\AA_K}/H^0(X,L)\cong 
\left(H^0(X,L)\otimes_{\QQ}\RR /M\right) \times \prod_{v\nmid \infty} M_{O_{K_v}}.$$
It follows that 
\begin{eqnarray*}
\chil(X,\bar L) =  \log \frac{\vol(\prod_{v|\infty } B_{v,L^2}(\bar L))}{ \vol(H^0(X,L)\otimes_{\QQ}\RR /M) }
+\sum_{v\nmid \infty }\log \frac{\vol(B_v(\bar L))}{\vol(M_{O_{K_v}})}.
\end{eqnarray*}
Identify $H^0(X,L)$ with $K^N$ via the basis $\{s_i\}$.
Endow $H^0(X,L)\otimes_{\QQ}\RR \cong \RR^{r_1N}\times \CC^{r_2N}$ with the Lebesgue measure.
We have $\vol(H^0(X,L)\otimes_{\QQ}\RR /M)=\sqrt{d_K^N}$, and 
$$
\vol(B_{v,L^2}(\bar L))= \begin{cases}
\det(\langle s_i,s_j\rangle_v)^{-\frac 12} V(N)  & \mathrm{if\ } v \mathrm{\ is \ real};\\
\det(\langle s_i,s_j\rangle_v) V(2N)  & \mathrm{if\ } v \mathrm{\ is \ imaginary}.
\end{cases}
$$
Here $$\displaystyle V(N)=\pi^{\frac{N}{2}}/\Gamma(\frac{N}{2}+1)$$
is the volume of the unit ball in the Euclidean space $\RR^N$. 
It follows that
\begin{eqnarray*}
\chil(X,\bar L) = \deg H^0(X,\bar L)-\frac 12 N\log d_K +r_1\log V(N)+r_2\log V(2N).
\end{eqnarray*}
Then the result follows from Stirling's formula.

\end{proof}

\begin{remark}
The vector space $H^0(X,L)$ over $K$ is endowed with the $L^2$-norm at archimedean places and the supremum norms at non-archimedean places. Then it induces an adelic metric on the one-dimensional $K$-vector space $\det H^0(X,L)$, whose arithmetic degree is exactly $\deg H^0(X,\bar L)$ defined above. A more elegant expression is 
\begin{eqnarray*}
\deg H^0(X,\bar L) = - \sum_{v} \log\| s_1\wedge s_2\wedge \cdots\wedge s_N \|_v.
\end{eqnarray*}
The bound above is a Riemann-Roch type result.
\end{remark}

The following is the fundamental identity between the Euler characteristic and Chebyshev transform. 
For convenience, for non-archimedean $v$, we set $F'_v:=F_v$.

\begin{pro}\label{fundamental identity}
\begin{eqnarray*}
\deg H^0(X,\bar L) = -\sum_{v} \sum_{\alpha\in \Lambda_1(L)} F'_v[\bar L](\alpha).
\end{eqnarray*}
\end{pro}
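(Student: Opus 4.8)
The plan is to compute $\deg H^0(X,\bar L)$ by choosing a basis of $H^0(X,L)$ adapted to the valuation filtration, and then to recognize the resulting sum of local terms as the sum of local discrete Chebyshev transforms. First I would fix, for each $\alpha\in\Lambda_1(L)=\nu(H^0(X,L))$, a section $s_\alpha\in H^0(X,L)(\alpha)$, i.e.\ a section with $\nu(s_\alpha)=\alpha$ and leading coefficient $1$; as recalled in the introduction, these $\{s_\alpha\}$ form a $K$-basis of $H^0(X,L)$, because the valuation map is injective on the leading terms and every section decomposes uniquely as $\sum_\alpha b_\alpha s_\alpha$. Using the elegant expression from the Remark, $\deg H^0(X,\bar L)=-\sum_v\log\|s_1\wedge\cdots\wedge s_N\|_v$ where the wedge is taken over this basis and each $\|\cdot\|_v$ is the (determinant of the) norm induced by the $L^2$-norm for $v\mid\infty$ and the supremum norm for $v\nmid\infty$ on $H^0(X,L)_{K_v}$.

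The key point is then the per-place identity
\[
\log\|s_1\wedge\cdots\wedge s_N\|_v \;=\; \sum_{\alpha\in\Lambda_1(L)} F'_v[\bar L](\alpha),
\]
which I would prove by the following observation: the subspaces $V_{\geq\beta}:=\mathrm{span}_{K_v}\{s_\alpha:\alpha\geq\beta\}$ form a decreasing flag of $H^0(X,L)_{K_v}$ by the additivity of $\nu$ under the lexicographic order, and, crucially, $V_{\geq\beta}$ is exactly the span of \emph{all} sections in $H^0(X_{K_v},L_{K_v})$ with valuation $\geq\beta$. Hence the successive quotient at $\beta$ is one-dimensional, spanned by the image of $s_\beta$, and the "length" (archimedean: log of the norm of the induced vector in the quotient; non-archimedean: log of the covolume of the image lattice in the quotient) equals $\inf\{\log\|s\|_{v}: s\in H^0(X_{K_v},L_{K_v})(\beta)\}=F'_v[\bar L](\beta)$: indeed the infimum over the coset $s_\beta + V_{>\beta}$ of the norm is precisely the quotient norm applied to the class of $s_\beta$, and that class has "coordinate $1$". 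Multiplying these one-dimensional determinants down the flag — i.e.\ using that $\log$ of the determinant of the norm on $\det H^0(X,L)_{K_v}$ is the sum of the $\log$'s of the quotient norms along any full flag (for an inner product this is orthogonalization; for a lattice this is a standard Smith/filtration computation) — gives exactly $\sum_{\alpha} F'_v[\bar L](\alpha)$. Summing over all $v$ and using the Remark's formula yields the proposition.

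The main obstacle I anticipate is the non-archimedean bookkeeping: one must be careful that $F_v[\bar L](\alpha)$ is genuinely the infimum of $\log\|s\|_{v,\sup}$ over the affine subspace $s_\alpha+\{$higher-order terms$\}$ and that this coincides with the quotient-lattice covolume, using the imposed rationality condition (the image of $\|\cdot\|_{v,\sup}$ lies in $|K_v^\times|$) so that the infimum is attained and the "unit ball modulo sublattice" identity holds without passing to $\CC_v$. One also needs that no sections of smaller valuation than any element of $\Lambda_1(L)$ exist and that the flag $V_{\geq\beta}$ really is saturated by \emph{all} sections of valuation $\geq\beta$ — this is immediate from the unique-decomposition statement recalled in \S1.1, but it is the hinge of the argument and worth spelling out. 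Everything else — swapping $F_v'$ for $F_v$ at finite places, finiteness of the sum over $v$ by Lemma~\ref{trivial bounds}(3a) — is routine.
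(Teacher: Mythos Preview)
Your proposal is correct and follows essentially the same route as the paper's proof. The paper also fixes the basis $\{s_\alpha\}$ and, at each place, passes to minimizers $e_{v,\alpha}\in H^0(X_{K_v},L_{K_v})(\alpha)$ attaining $F'_v[\bar L](\alpha)$; it then shows directly that the $e_{v,\alpha}$ are $L^2$-orthogonal (archimedean) and that $B_v(\bar L)=\bigoplus_\alpha O_{K_v}e^\circ_{v,\alpha}$ (non-archimedean), which is the concrete incarnation of your flag/quotient-norm decomposition---in particular the paper's inductive peeling argument at finite places is exactly the verification that the image of $B_v(\bar L)$ in each one-dimensional quotient is the quotient-norm unit ball, the step you flag as needing the rationality hypothesis.
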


\begin{proof}
For every $\alpha\in \Lambda_1(L)$, we pick an element $s_{\alpha}$ of
$$H^0(X,L)(\alpha)= \{s \in H^0(X,L): \
s = t^{\alpha}+ \mathrm{\ higher\ order\ terms} \}.$$
Then $\{s_{\alpha}: \alpha\in \Lambda_1(L) \}$ forms a $K$-basis for $H^0(X,L)$.
By definition, 
\begin{eqnarray*}
\deg H^0(X,\bar L) = - \sum_{v\mid\infty} 
\log \sqrt{\det(\langle s_\alpha,s_\beta\rangle_v)_{\alpha,\beta\in \Lambda_1(L)}}
-\sum_{v\nmid \infty }\log \frac{\vol(\sum_{\alpha\in \Lambda_1(L)} O_{K_v} s_\alpha)}{\vol(B_v(\bar L))}.
\end{eqnarray*}
We will show the match of the local terms at each place $v$.

Recall that for $\alpha\in \Lambda_1(L)$,
$$
F'_v[\bar L](\alpha)=\inf_{s\in H^0(X_{K_v},L_{K_v})(\alpha)} \{  \log \|s\|_{v,\sup} \}.
$$
Let $e_{v,\alpha}\in H^0(X_{K_v},L_{K_v})(\alpha)$ be an element that takes the infimum.
Then we have 
$$
\sum_{\alpha\in \Lambda_1(L)} F'_v[\bar L](\alpha)
=\log \prod_{\alpha\in \Lambda_1(L)} \|e_{v,\alpha}\|_{v,\sup}
$$
for non-archimedean $v$. 
One needs to replace the supremum norm by the $L^2$-norm in the above expression for archimedean $v$.

First assume that $v$ is archimedean.
Since both $s_\alpha$ and $e_{v,\alpha}$ are elements of $H^0(X_{K_v},L_{K_v})(\alpha)$,
the transition matrix between the basis 
$\{e_{v,\alpha}\}_\alpha$ and $\{s_{\alpha}\}_\alpha$ is upper-triangular with 1 on the diagonals.
Thus it has determinant 1.
It follows that 
$$\det(\langle s_\alpha,s_\beta\rangle_v)_{\alpha,\beta\in \Lambda_1(L)}=
\det(\langle e_{v,\alpha},e_{v,\beta}\rangle_v)_{\alpha,\beta\in \Lambda_1(L)}.$$

A key property of $\{e_{v,\alpha}\}_\alpha$ is that they form an orthogonal basis of $H^0(X_{\CC_v},L_{\CC_v})$.
Otherwise, assume that $e_{v,\alpha}$ is not orthogonal to $e_{v,\alpha'}$
for some $\alpha<\alpha'$. Then there will be an $\epsilon\in K_v$ such that the norm of
$e_{v,\alpha}+ \epsilon e_{v,\alpha'}$ is greater than 
$e_{v,\alpha}$. It contradicts to the choice of $e_{v,\alpha}$.

Hence, we simply have 
$$\det(\langle s_\alpha,s_\beta\rangle_v)_{\alpha,\beta\in \Lambda_1(L)}=
\prod_{\alpha\in \Lambda_1(L)}
\|e_{v,\alpha}\|_{v,L^2}^2.$$
It gives the matching
$$
\log \sqrt{\det(\langle s_\alpha,s_\beta\rangle_v)_{\alpha,\beta\in \Lambda_1(L)}}=
\sum_{\alpha\in \Lambda_1(L)} F'_v[\bar L](\alpha).$$

Next, assume that $v$ is non-archimedean.
The transition matrix between $s_\alpha$ and $e_{v,\alpha}$ still has determinant 1.
It follows that we have 
$$
\vol(\sum_{\alpha\in \Lambda_1(L)} O_{K_v} s_\alpha)=\vol(\sum_{\alpha\in \Lambda_1(L)} O_{K_v} e_{v,\alpha}).
$$
It remains to show that 
$$
\frac{\vol(\sum_{\alpha\in \Lambda_1(L)} O_{K_v} e_{v,\alpha})}{\vol(B_v(\bar L))}
=\prod_{\alpha\in \Lambda_1(L)} \|e_{v,\alpha}\|_{v,\sup}.
$$
We claim that 
$$
B_v(\bar L)=\sum_{\alpha\in \Lambda_1(L)} O_{K_v} e_{v,\alpha}^{\circ},
$$
Here $e_{v,\alpha}^{\circ}$ is any element in $K_v e_{v,\alpha}$ with norm 1.
It is easy to see that it implies what we want.

Now we prove the claim.
Let $s\in B_v(\bar L)$ be any element. We need to show that 
$s$ belongs to $\sum_{\alpha} O_{K_v} e_{v,\alpha}^{\circ}$.
We can uniquely write
$$
s=\sum_{\alpha\in \Lambda_1(L)} a_{\alpha} e_{v,\alpha},\quad  a_{\alpha}\in K_v.
$$
It suffices to show that $|a_{\alpha}|_v \cdot  \|e_{v,\alpha}\|_{v,\sup}\leq 1$ for all $\alpha$.

Let $\alpha_1 < \alpha_2 < \cdots < \alpha_N$ be the elements of $\Lambda_1(L)$ in the lexicographic order.
If $a_{\alpha_1}\neq 0$, we have 
$$
1\geq \|s\|_{v,\sup}
\geq  |a_{\alpha_1}|_v \cdot 
\left\| e_{v,\alpha_1}+ \sum_{\alpha\neq \alpha_1} a_{\alpha_1}^{-1}a_{\alpha} e_{v,\alpha} \right\|_{v,\sup} 
\geq |a_{\alpha_1}|_v \cdot  \|e_{v,\alpha_1}\|_{v,\sup}.
$$
Here the last inequality follows from the minimality of $e_{v,\alpha_1}$.
It confirms the case $\alpha=\alpha_1$.
Considering 
$$
s-a_{\alpha_1} e_{v,\alpha_1}=\sum_{\alpha\neq \alpha_1} a_{\alpha} e_{v,\alpha}  \in B_v(\bar L),
$$
we can show that $|a_{\alpha_2}|_v \cdot \|e_{v,\alpha_2}\|_{v,\sup} \leq 1$.
Inductively, we will have the result for all $\alpha$.

\end{proof}

Now it is easy to prove Theorem \ref{summation}.
By Proposition \ref{degree} and Proposition \ref{fundamental identity}, 
\begin{eqnarray*}
\chil(m \bar L) = -\sum_{v} \sum_{\alpha\in \Lambda_m(L)} F'_v[m\bar L](m\alpha)+O(m^d\log m).
\end{eqnarray*}
By Lemma \ref{norm comparison}, we have the supremum version
\begin{eqnarray*}
\chi(m \bar L) = -\sum_{v} \sum_{\alpha\in \Lambda_m(L)} F_v[m\bar L](m\alpha)+O(m^d\log m).
\end{eqnarray*}
By Lemma \ref{trivial bounds} (3a), there are only finitely many nonzero terms in the double summation, so we can change the order of the summation. It gives the identity in the theorem.

\subsection{Proof of the main result}
Now we are ready to prove Theorem \ref{integration}.
Note that if we change the metric $\|\cdot\|_v$ to $e^{-a}\|\cdot\|_v$ for some constant $a\in\RR$, then
$F_v[\bar L]$ will increase by $a$. By this fact it is easy to see that the truth of Theorem \ref{integration} does not change. By Lemma \ref{trivial bounds}, we can assume that all $F_v[m\bar L]\geq 0$ for all $v$ and all $m$.
Then everything involved in the summation and integrals below are non-negative.

\subsubsection*{The first part}
By Theorem \ref{summation},
\begin{eqnarray}\label{111}
\frac{\chi(m\bar L)}{m^{d+1}} = 
-\frac{1}{m^d} \sum_{\alpha\in \Lambda_m(L)} \frac{1}{m}F[m\bar L](m\alpha)+O(\frac 1m\log m).
\end{eqnarray}

Observe that 
$$\frac{1}{m}F[m\bar L](m\alpha)\geq c[\bar L](\alpha).$$
In fact, for any $\alpha\in \Lambda_m(L)$, the sequence $\displaystyle\frac{1}{mk}F_v[mk\bar L](mk\alpha)$ decreases to $c_v[\bar L](\alpha)$ as $k\rightarrow\infty$. Hence the limit is always smaller.
It follows that (\ref{111}) gives
\begin{eqnarray*}
-\frac{\chi(m\bar L)}{m^{d+1}} \geq  
 \frac{1}{m^d} \sum_{\alpha\in \Lambda_m(L)} c[\bar L](\alpha)+O(\frac 1m\log m).
\end{eqnarray*}

By Corollary \ref{convex geometry}, for any convex body $D$ contained in $\Delta(L)^\circ$, 
\begin{eqnarray*}
-\frac{\chi(m\bar L)}{m^{d+1}} \geq  
 \frac{1}{m^d} \sum_{\alpha\in D\cap \frac 1m \NN^d} c[\bar L](\alpha)+O(\frac 1m\log m).
\end{eqnarray*}
The right-hand side is a Riemann sum for $c[\bar L]$, except for some exceptions on the boundary of $D$ which can be ignored. Taking limit, we obtain
$$
-\frac{1}{d!}\vol_{\chi}(\bar L) \geq \int_{D} c[\bar L](\alpha)d\alpha.
$$
Let $D\rightarrow \Delta(L)$. We obtain
$$
-\frac{1}{d!}\vol_{\chi}(\bar L) \geq \int_{\Delta(L)} c[\bar L](\alpha)d\alpha.
$$
Then we see that $c[\bar L]$ and all $c_v[\bar L]$ are integrable by the following lemma.

\subsubsection*{The second part}
Assume that $\displaystyle \frac{1}{m}F[m\bar L]$ is uniformly bounded. Then our regularization method is the similar to Nystr\"om's proof. 

Recall that in (\ref{111}) we have
\begin{eqnarray*}
\frac{\chi(m\bar L)}{m^{d+1}} = 
-\frac{1}{m^d} \sum_{\alpha\in \Lambda_m(L)} \frac{1}{m}F[m\bar L](m\alpha)+O(\frac 1m\log m).
\end{eqnarray*}
Define a step function $\widetilde c_m: \RR^n\rightarrow \RR$ as follows. 
Define 
$$
\widetilde c_m(\alpha+ \beta)=\frac{1}{m}F[m\bar L](m\alpha), 
\quad \forall \alpha\in \Lambda_m(L),\  \beta \in [0, 1/m)^d.
$$
Define $\widetilde c_m(x)$ to be zero if $x\in \RR^n$ can not be written in the form $\alpha+ \beta$ describe above form. Then the summation on the right-hand side is exactly equal to the integral of $\widetilde c_m$.
We get  
\begin{eqnarray}\label{222}
\frac{\chi(m\bar L)}{m^{d+1}} = 
-\int_{\RR^n} \widetilde c_m(x)dx+O(\frac 1m\log m).
\end{eqnarray}
By Proposition \ref{global limit} and Corollary \ref{convex geometry}, we know that 
$\widetilde c_m(x)$ converges to $c[\bar L](x)1_{\bodyin}(x)$ almost everywhere. 
Note that $\widetilde c_m(x)$ is uniformly bounded and uniformly supported on a bounded domain. We can use dominant convergence theorem to conclude that 
\begin{eqnarray*}
\lim_{m\rightarrow\infty} \int_{\RR^n} \widetilde c_m(x)dx= \int_{\Delta(L)} c[\bar L](x)dx.
\end{eqnarray*}
Take limit in (\ref{222}). It proves the result.

\

We conjecture that $\displaystyle \frac{1}{m}F[m\bar L]$ is uniformly bounded for all big $L$. 
It is easy to see that the result does not depend on the adelic metric on $L$.
In fact, if $\{\|\cdot\|_v\}_v$ and $\{\|\cdot\|'_v\}_v$ are two metrics of $L$, 
then we can find a positive constant $b_v\geq 1$ for each place $v$ such that 
$b_v^{-1}\|\cdot\|_v\leq \|\cdot\|'_v \leq b_v\|\cdot\|_v$. Furthermore, we can take $b_v=1$ for almost all $v$.
Then  
$$ -\sum_v\log b_v
\leq\sum_{v} \frac{1}{m}F_v[\|\cdot\|'_v](m\alpha,m)-
\sum_{v} \frac{1}{m}F_v[\|\cdot\|_v](m\alpha,m)
\leq\sum_v\log b_v.$$
Then one of the sums is bounded if and only if the other one is bounded.

The following are some simple examples for which the uniform bound is easy to obtain.
\begin{example}
\noindent \textbf{(1)} The standard valuation in projective space.
More precisely, $X=\PP^d$ with homogeneous coordinate $(Z_0, \cdots, Z_d)$,
base point $x_0=(1,0,\cdots, 0)$, and local coordinate $t=(Z_1/Z_0, \cdots, Z_d/Z_0)$.
The line bundle $\bar L=(O(1), \{\|\cdot\|_v\}_v)$ with arbitrary adelic metric, and the trivialization is given by the base section $s_0=Z_0$.
Then it is easy to verify that
$$
\frac{1}{m}F[m\bar L](m\alpha) \leq \max_{0\leq i\leq d} \sum_{v}\log\|Z_i\|_{v,\sup}.
$$
Note that the summation on the right-hand side is always a finite sum by the definition of adelic metric. 
\end{example}

\textbf{(2)} Assume that the value semi-group 
$$\Gamma= \bigcup_{m\geq 1} \nu(H^0(X,mL))\times \{m\}$$
is finitely generated. 
As in the proof of Proposition \ref{global limit}, we have 
$$
\frac{1}{m}F[m\bar L](m\alpha) \leq 
\max_{1\leq j\leq r} \frac{1}{n_j}F[n_j\bar L](n_j\beta_j)
$$
where $\{(n_j\beta_j,n_j): j=1,\cdots,r\}$ is a set of generator of $\Gamma$.
It happens if $X$ is a toric variety and $x_0, \divv(s_0), \divv(t_1), \cdots, \divv(t_d)$ 
are invariant under the torus action.

\

\

{\footnotesize
Address: Department of Mathematics, Harvard University, One Oxford Street, Cambridge, MA 02138

Email: yxy@math.harvard.edu
}

\end{document}